 \newtheorem{thm}{Theorem}[section]
 \newtheorem{cor}[thm]{Corollary}
 \newtheorem{lem}[thm]{Lemma}
 \newtheorem{prop}[thm]{Proposition}
 \theoremstyle{definition}
 \newtheorem{defn}[thm]{Definition}
 \theoremstyle{remark}
 \newtheorem{rem}[thm]{Remark}
 \numberwithin{equation}{section}
\numberwithin{equation}{section}
\newcommand{\cl}[1]{\mathcal{#1}} 
\newcommand{\bb}[1]{\mathbb{#1}}
\newcommand{\sca}[1]{\left\langle#1\right\rangle} 
\newcommand{\Lat}[1]{\mathrm{Lat}(#1)} 
\newcommand{\Alg}[1]{\mathrm{Alg}(#1)} 
\newcommand{\pr}[1]{\mathrm{pr}(#1)} 
\begin{document}

\title[The similarity problem and hyperreflexivity of von Neumann algebras]
{The similarity problem and hyperreflexivity of von Neumann algebras}

\author{G. K. Eleftherakis}
\address{University of Patras\\Faculty of Sciences\\ Department of Mathematics\\265 00 Patras Greece}
\email{gelefth@math.upatras.gr}

\author{E. Papapetros}
\address{University of Patras\\Faculty of Sciences\\ Department of Mathematics\\265 00 Patras Greece}
\email{e.papapetros@upatras.gr}
\subjclass{Primary: 47L30; Secondary: 47C15, 46L05, 46L10}

\keywords{von Neumann algebras, similarity, hyperreflexivity, injectivity}

\date{January 1, 2004}

\begin{abstract}
We say that a $C^*$-algebra $\cl A$ satisfies the similarity property ((SP)) if every bounded homomorphism $u\colon \cl A\to \cl B(H),$ where $H$ is a Hilbert space, is similar to a $*$-homomorphism and that a von Neumann algebra $\cl A$ satisfies the weak similarity property ((WSP)) if every $\mathrm{w}^*$-continuous, unital and bounded homomorphism $u\colon \cl A\to \cl B(H),$ where $H$ is a Hilbert space, is similar to a $*$-homomorphism. 
We prove that a von Neumann algebra $\cl A$ satisfies (WSP) if and only if the algebras $\cl A^{\prime}\bar \otimes \cl B(\ell^2(J))$ are hyperreflexive for all cardinals $J.$  We also introduce the hypothesis {\bf (CHH)}: Every hyperreflexive separably acting von Neumann algebra is completely hyperreflexive. We show that under {\bf (CHH)}, all $C^*$-algebras satisfy (SP). Finally, we prove that the spatial tensor product of an injective von Neumann algebra and a von Neumann algebra satisfying (WSP) also satisfies (WSP).
\end{abstract}

\maketitle

\section{Introduction}
One of the most famous open problems in the theory of $C^*$-algebras is  
 Kadison's similarity problem \cite{kad}: 
Given a $C^*$-algebra $ \cl A$ and a bounded homomorphism $u\colon \cl A\rightarrow \cl B(H),$ where $H$ is a Hilbert space, does there exist 
an invertible operator $S\in \cl B(H)$ such that the map $\pi (\cdot)=S^{-1}\,u(\cdot)\,S$ defines a $*$-homomorphism? In this case we say that $u$ is similar to $\pi.$ We also say 
that $\cl A$ satisfies the similarity property ((SP)) if every bounded homomorphism $u\colon \cl A\rightarrow \cl B(H),$ where $H$ is a Hilbert space, 
has the above property. 

In  \cite{haag}, Haagerup gave an affirmative answer for the case in which the map $u$ has a finite cyclic set, i.e. there exists a subset $\{\xi _1, ...,
\xi_n \}\subset H$ such that $$H=\overline{[u(A)\xi _i \mid A\in\cl A,\,\,i=1,...,n]},$$ where, in general, for a subset $\cl L$ of some vector space $X,$ $\,[\cl L]$ denotes the linear span of $\cl L.$  
In the same article Haagerup proved that  every completely bounded homomorphism from a $C^*$-algebra $\cl A$ into some $\cl B(H),$ where $H$ is a Hilbert space, is similar to a $*$-homomorphism. 

The similarity problem turns out to be equivalent to a number of questions, including the problem of the hyperreflexivity of all von Neumann algebras, as well as the derivation problem. More specifically, the following statements are equivalent :
\begin{center}
\begin{enumerate}[(1)]
  \item All unital $C^*$-algebras satisfy (SP).
  \item Every von Neumann algebra is hyperreflexive.
  \item For every $*$-representation $(H,\pi)$ of a unital $C^*$-algebra $\cl A$ and every bounded derivation $\delta\colon \cl A\to \cl B(H)$ with respect to $\pi$ there exists an operator $T\in\cl B(H)$ such that $$\delta(A)=\delta_{\pi, T}(A)=T \pi(A)-\pi(A) T,\,\,A\in\cl A.$$
\end{enumerate}
\end{center}

In \cite{ki}, Kirchberg proved that the similarity problem and the derivation problem for $C^*$-algebras are equivalent. Hyperreflexivity questions for von Neumann algebras are closely related to problems involving derivations of $C^*$-algebras. It is known from the work of Christensen \cite{chris}, that if $\cl A$ is a $C^*$-algebra in $\cl B(H),$ then every derivation from $\cl A$ into $\cl B(H)$ is inner in $\cl B(H)$ if and only the von Neumann algebra $\cl A^\prime$ is hyperreflexive. Arveson in \cite{arv} showed that all nest algebras are hyperreflexive with hyperreflexivity constant $1.$  Although many von Neumann algebras are hyperreflexive, the question of whether every von Neumann algebra is hyperreflexive remains open. For further information we refer the reader to \cite{arv,chris,dl, ki, pisier1}. 

A $\mathrm{w}^*$-closed space $\cl X$ is called completely hyperreflexive if the space  $\cl X\bar\otimes \cl B(\ell^2(\bb N))$ is hyperreflexive. It is obvious that completely hyperreflexive spaces are hyperreflexive. On the other hand, whether hyperreflexivity implies complete hyperreflexivity remains an open problem \cite{do, dl}. As we show in Section 3, the resolution of this problem, if we restrict to separably acting von Neumann algebras, yields an affirmative answer to the similarity problem. 

We introduce the following hypothesis:

\textbf{(CHH)}: Every hyperreflexive separably acting von Neumann algebra is completely hyperreflexive.  

We say that a von Neumann algebra $\cl A$ satisfies the weak similarity property ((WSP)) if  
every w*-continuous, unital and bounded homomorphism $u\colon \cl A\to \cl B(H),$ where $H$ is a Hilbert space, is similar to a $*$-homomorphism. The connection between (SP) and (WSP) is given by the following lemma. 

\begin{lem}
\label{WSP}
Let $\cl A$ be a unital $C^*$-algebra. The algebra $\cl A$ satisfies (SP) if and only if the second dual algebra $\cl A^{**}$ satisfies (WSP).
\end{lem}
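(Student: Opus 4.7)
The plan is to exploit the standard identification of $\cl A^{**}$ with the enveloping von Neumann algebra of $\cl A$: the canonical inclusion $\iota\colon \cl A\to\cl A^{**}$ is an isometric unital $*$-homomorphism with $\mathrm{w}^*$-dense range, and every bounded homomorphism $u\colon \cl A\to\cl B(H)$ admits a unique extension to a $\mathrm{w}^*$-continuous bounded homomorphism $\tilde u\colon \cl A^{**}\to \cl B(H)$ of the same norm. Existence and uniqueness of the linear extension follow from the universal property of the bidual applied via the embedding of $\cl B(H)$ into its own bidual, while multiplicativity of $\tilde u$ is transferred from $\iota(\cl A)$ to $\cl A^{**}$ using the separate $\mathrm{w}^*$-continuity of the Arens product on $\cl A^{**}$ together with $\mathrm{w}^*$-density of $\iota(\cl A)$. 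Observe that $\tilde u$ is unital whenever $u$ is, and is a $*$-homomorphism iff $u$ is.

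For the forward implication, assume $\cl A$ satisfies (SP) and let $v\colon \cl A^{**}\to \cl B(H)$ be a $\mathrm{w}^*$-continuous unital bounded homomorphism. Then $u:=v\circ \iota$ is a bounded homomorphism on $\cl A$, so (SP) furnishes an invertible $S\in \cl B(H)$ such that $S^{-1}u(\cdot)S$ is a $*$-homomorphism. Setting $w(\cdot):=S^{-1}v(\cdot)S$, the involutions on $\cl A^{**}$ and on $\cl B(H)$ are both $\mathrm{w}^*$-continuous, so the map $b\mapsto w(b^*)-w(b)^*$ is $\mathrm{w}^*$-continuous and vanishes on $\iota(\cl A)$; by $\mathrm{w}^*$-density it vanishes on $\cl A^{**}$. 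Thus $v$ is similar to the $*$-homomorphism $w$, giving (WSP).

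For the converse, assume $\cl A^{**}$ satisfies (WSP) and let $u\colon \cl A\to \cl B(H)$ be a bounded homomorphism. After the standard reduction---$u(1)$ is a bounded idempotent, hence similar via some invertible $T$ to a projection $p$, and one compresses to $pH$ to view the conjugated map as a unital bounded homomorphism into $\cl B(pH)\subset\cl B(H)$---we may assume $u$ is unital. Its extension $\tilde u\colon \cl A^{**}\to \cl B(H)$ is then a $\mathrm{w}^*$-continuous unital bounded homomorphism, so (WSP) supplies an invertible $S$ making $S^{-1}\tilde u(\cdot)S$ a $*$-homomorphism; restricting along $\iota$ exhibits $u$ as similar to a $*$-homomorphism, and undoing the reduction by $T$ yields the same conclusion for the original $u$.

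The only real subtlety lies in constructing $\tilde u$ and verifying that it is a $\mathrm{w}^*$-continuous homomorphism; once that bidual-extension machinery is in place, both implications reduce to routine diagram chases using $\mathrm{w}^*$-density of $\iota(\cl A)$ and $\mathrm{w}^*$-continuity of the involution on $\cl B(H)$, which is why the authors omit the proof.
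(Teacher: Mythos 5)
Your proof is correct, and since the paper omits its own proof of this lemma as ``simple,'' your argument is exactly the standard one the authors have in mind: the canonical $\mathrm{w}^*$-continuous extension of a bounded map into the dual space $\cl B(H)$ (the same device the paper itself invokes via \cite[A.2.2]{bm} in Lemma \ref{an}), transfer of multiplicativity and $*$-preservation by separate $\mathrm{w}^*$-continuity plus $\mathrm{w}^*$-density of $\iota(\cl A)$, and the idempotent-to-projection reduction to the unital case in the converse direction. No gaps; in particular you correctly handle the only delicate points, namely that (WSP) only covers unital $\mathrm{w}^*$-continuous homomorphisms and that $*$-preservation of the conjugated map must be pushed from $\iota(\cl A)$ to $\cl A^{**}$ by a density argument.
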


\begin{proof}
    Assume that $\cl A$ satisfies (SP) and let $u\colon \cl A^{**}\to \cl B(H),$ where $H$ is a Hilbert space, be a $\rm{w}^*$-continuous, unital and bounded homomorphism. Then $u|_{\cl A}\colon \cl A\to \cl B(H)$ is a unital bounded homomorphism and since $\cl A$ satisfies (SP), $\,u|_{\cl A}$ is similar to a $*$-homomorphism, i.e. there exists an invertible operator $S\in\cl B(H)$ such that $\pi(A)=S^{-1}\,u|_{\cl A}(A)\,S,\,\,A\in\cl A\,$ is a $*$-homomorphism. The algebra $\cl A$ is $\rm{w}^*$-dense in $\cl A^{**}$ and by \cite[Theorem A.55]{pisier3} $\pi$ extends to a $\rm{w}^*$-continuous $*$-homomorphism on $\cl A^{**}.$ Therefore, $\cl A^{**}$ satisfies (WSP).
    
    Conversely, assume that $\cl A^{**}$ satisfies (WSP) and let $u\colon \cl A\to \cl B(H),$ where $H$ is a Hilbert space, be a unital bounded homomorphism. According to \cite[Theorem A.55]{pisier3} there exists a $\rm{w}^*$-continuous and unital bounded homomorphism $\tilde{u}\colon \cl A^{**}\to \cl B(H)$ extending $u.$ By the fact that $\cl A^{**}$ satisfies (WSP), there exists an invertible operator $S\in\cl B(H)$ such that $\pi(X)=S^{-1}\,\tilde{u}(X)\,S,\,\,X\in\cl A^{**}$ is a $*$-homomorphism, and thus $u$ is similar to the $*$-homomorphism $\pi|_{\cl A},$ as desired.
\end{proof}

Throughout this paper, if $\cl X$ and $\cl Y$ are dual operator spaces, then we denote by 
$\cl X\bar \otimes \cl Y$ their spatial tensor product. If $H$ is a Hilbert space with orthonormal basis of cardinality $J$ and $\cl B(H)$ is the algebra of bounded operators acting on $H,$ sometimes we identify $\cl X\bar \otimes \cl B(H)$ with the dual operator space $M_J(\cl X)$ of $J\times J$ matrices with entries in $\cl X$ which define bounded operators. If $J=\bb N$ we simply write $M_{\infty}(\cl X).$ See \cite{bm,pau,pisbook} for further details.

In Section 2, we prove that a von Neumann algebra $\cl A$ satisfies (WSP) if and only if the von Neumann algebras $\cl A^{\prime}\bar \otimes \cl B(\ell^2(J))$ are hyperreflexive for all cardinals $J.$ Here, $\cl A^{\prime}$ is the commutant of $\cl A.$ In case $\cl A\subseteq \cl B(H)$ is a von Neumann algebra and $H$ is a separable Hilbert space, $\cl A$ satisfies (WSP) if and only if the algebra $\cl A^\prime \bar \otimes \cl B(\ell^2(\bb{N}))$ is hyperreflexive. 

In Section 3, we show that under {\bf (CHH)} every separably acting von Neumann algebra satisfies (WSP). Using this result, namely all separably acting von Neumann algebras satisfy (WSP), we prove that all $C^*$-algebras satisfy (SP).

In Section 4 we prove that if $\cl A$ is an injective von Neumann algebra and $\cl B$ is a von Neumann algebra satisfying (WSP), then the algebra $\cl A\bar \otimes \cl B$ satisfies (WSP). Finally, we prove that the similarity degree $d_{*}(\cl A\bar \otimes \cl B)$ of $\cl A\bar \otimes \cl B$ is less than $2k_0+8$ where $$k_0=\sup\left\{k(\cl B^{\prime}\bar \otimes \cl B(\ell^2(J)))\mid J: \text{infinite cardinal}\right\}\in\left[1,+\infty\right).$$
Here $k(\cl B^{\prime}\bar \otimes \cl B(\ell^2(J)))$ is the hyperreflexivity constant of $\cl B^{\prime}\bar \otimes \cl B(\ell^2(J)).$

In what follows, if $(\cl X,\,||\cdot||)$ is a normed space, then by $Ball(\cl X)$ we denote the set consisting of $x\in\cl X$ such that $||x||\leq 1.$ By $\,\cl B(H,K)$ we denote the space of all linear and bounded operators from the Hilbert space $H$ to the Hilbert space $K.$  If $H=K$ we simply write $\cl B(H,H)=\cl B(H).$ Furthermore, $\,\dim H$ denotes the Hilbert dimension of a Hilbert space $H,$ that is the cardinality of an orthonormal basis of $H.$ 
If $\cl D$ is a set of operators acting on some Hilbert space $H$ then $$\cl D^\prime=\left\{X\in\cl B(H)\mid XD=DX ,\,\forall\,D\in\cl D\right\}$$ is the commutant of $\cl D$ and $\pr{\cl D}$ is the set of projections $P\colon H\to H$ such that $P\in\cl D.$ A set $\cl L\subseteq \pr{\cl B(H)}$ which contains $0,I_{H}$ and arbitrary intersections and closed spans is called a lattice. We denote by $\Alg{\cl L}$ the unital dual operator algebra $$\Alg{\cl L}=\left\{X\in\cl B(H)\mid P^{\perp} X P=0,\,\forall\,P\in \cl L\right\},$$ where $P^{\perp}=I_{H}-P$ and $I_{H}$ is the identity operator of $H.$\\
Dually, if $\cl A$ is a unital dual operator algebra acting on a Hilbert space $H,$ then the set $$\Lat{\cl A}=\left\{P\in \pr{\cl B(H)}\mid P^{\perp} A P=0,\,\forall\,A\in\cl A\right\}$$ is a lattice. For example, if $\cl A$ is a von Neumann algebra acting on the Hilbert space $H,$ then $\Lat{\cl A}=\pr{\cl A^\prime}.$

If  $\cl A$ is a $C^*$-algebra 
and $H$ is a Hilbert space then a map $\pi\colon \cl A\rightarrow \cl B(H)$ is called a $*$-representation if $\pi $ is a  $*$-homomorphism.
 In the case in which $\cl A\,$ is a von Neumann algebra and $\pi $ is a w*-continuous unital $*$-homomorphism, 
we call $\pi $ a normal representation of $\cl A$ on $H.$

Let $\cl A$ be a $C^*$-algebra and $\pi\colon \cl A\to \cl B(H)$ be a $*$-representation of $\cl A$ on the Hilbert space $H.$ A linear map $\delta\colon \cl A\to \cl B(H)$ is called a derivation with respect to $\pi$ if $$\delta(AB)=\pi(A)\delta(B)+\delta(A) \pi(B),\,\,A,B\in\cl A.$$ 
An inner derivation $\delta$ of the $C^*$-algebra $\cl A$ with respect to $\pi$ is a linear map $\delta\colon \cl A\rightarrow \cl B(H)$ of the form 
$$\delta (A)=\delta_{\pi,\,T}(A) =T\pi(A)-\pi(A) T,$$ for some $T\in \cl B(H).$ 
The derivation problem is still an open problem \cite{ki}: Can we find for every $*$-representation $\pi\colon \cl A\to \cl B(H)$ on a Hilbert space $H$ and every bounded derivation $\delta\colon \cl A \to \cl B(H)$ with respect to $\pi$ an operator $T\in\cl B(H)$ such that $\delta=\delta_{\pi,T} ?$

If $\cl A$ is a 
von Neumann algebra, an inner derivation $\delta$ with respect to some normal representation $\pi$ of $\cl A$ is called normal.

Let $\cl U$ be a subspace of $\cl B(H,K),$ where $H,\,K$ are Hilbert spaces. If $T\in \cl B(H,K)$ we set
$$d(T, \,\cl U) =\inf_{U\in\,\cl U}\|T-U\|$$ 
to be the distance from $T$ to $\cl U$. 
We also set
$$r_{\cl U}(T) =\sup_{\|\xi \|=\|\eta \|=1} \left\{|\sca{T\xi ,\eta }|\,\,\,\,\mid   \sca{U\xi, \eta }=0,\,\forall\,U\in \cl U\right\}$$ 
and it is easy to see that $r_{\cl U}(T)\leq d(T,\cl U).$ 
The reflexive hull of $\,\cl U$ is defined to be the space $$\mathrm{Ref}(\cl U)=\left\{T\in \cl B(H,K)\mid T x\in\overline{\cl U x},\,\,\forall\,x\in H\right\}.$$ Clearly, $\,\cl U\subseteq \mathrm{Ref}(\cl U)$. If $\,\cl U=\mathrm{Ref}(\cl U),$ then the space $\cl U$ is called reflexive. 
\begin{lem}
    \label{4.4}
Let $\cl A$ be a von Neumann algebra acting on the Hilbert space $H$ and $T\in\cl B(H).$ We set $$\alpha_{\cl A}(T)=\sup\left\{||P^{\perp} T P||\,\,\,\mid P\in\Lat{\cl A}\right\}.$$
Then $\alpha_{\cl A}(T)\leq r_{\cl A}(T).$
\end{lem}

\begin{proof}
    Let $P\in\Lat{\cl A}$ and $\epsilon>0.$ There exist unit vectors $\xi,\,\eta\in H$ such that \begin{equation}
        \label{in} ||P^{\perp} T P||-\epsilon<\left|\sca{P^{\perp} T P \xi,\eta}\right|=\left|\sca{T P \xi,P^{\perp} \eta}\right|.
    \end{equation}
We set $\xi_0=P \xi,\,\,\eta_0=P^{\perp} \eta.$ Then $\xi_0,\,\,\eta_0\in Ball(H)$ and for every $A\in\cl A$ it holds that $$\sca{A \xi_0,\eta_0}=\sca{A P \xi,P^{\perp} \eta}=\sca{P^{\perp} A P \xi,\eta}=\sca{P^{\perp} P A \xi,\eta}=0,$$  and thus according to (\ref{in}) we have $$||P^{\perp} T P||-\epsilon<\left|\sca{T P \xi,P^{\perp} \eta}\right|\leq r_{\cl A}(T).$$
Taking $\epsilon\to 0^{+}$ it follows that $||P^{\perp} T P||\leq r_{\cl A}(T)$ and since $P\in\Lat{\cl A}$ is arbitrary we deduce that $\alpha_{\cl A}(T)\leq r_{\cl A}(T),$ as desired.
\end{proof}

The proof of the following lemma is straightforward.
\begin{lem}
\label{denominator}
If $\,\cl U$ is reflexive and $T\notin \cl U$ then $r_{\cl U}(T)\neq 0.$
\end{lem}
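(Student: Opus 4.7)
The proof is a direct unpacking of the definitions, so I do not anticipate any real obstacle; the plan is just to combine the reflexivity hypothesis with an elementary orthogonal-projection argument in the Hilbert space $K$.

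Since $\cl U$ is reflexive, the assumption $T\notin\cl U$ means $T\notin\mathrm{Ref}(\cl U)$, so by definition of the reflexive hull there exists some vector $x\in H$ with $Tx\notin\overline{\cl U x}$. In particular $x\neq 0$, because otherwise $Tx=0\in\overline{\cl U x}$. After normalizing I would set $\xi:=x/\nor{x}$, so that $\nor{\xi}=1$ and still $T\xi\notin\overline{\cl U \xi}$.

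Next I would use the orthogonal decomposition in $K$. Let $P\in\cl B(K)$ denote the orthogonal projection onto the closed subspace $\overline{\cl U x}\subseteq K$. Because $Tx\notin\overline{\cl U x}$, the vector $(I-P)Tx$ is nonzero, so $\eta:=(I-P)Tx/\nor{(I-P)Tx}$ is a well-defined unit vector in $K$. For every $U\in\cl U$ we have $Ux\in\overline{\cl U x}$, hence $(I-P)Ux=0$, which gives
\[
\sca{U\xi,\eta}=\nor{x}^{-1}\sca{Ux,\eta}=0.
\]
On the other hand,
\[
|\sca{T\xi,\eta}|=\nor{x}^{-1}|\sca{Tx,\eta}|=\nor{x}^{-1}\nor{(I-P)Tx}>0.
\]

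Combining these two facts with the definition of $r_{\cl U}(T)$ yields $r_{\cl U}(T)\geq |\sca{T\xi,\eta}|>0$, which is the desired conclusion. The only point requiring any care is to make sure $x\neq 0$ before normalizing, and this is immediate from the observation above.
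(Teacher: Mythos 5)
Your proof is correct: choosing $x$ with $Tx\notin\overline{\cl U x}$, projecting onto $\overline{\cl U x}$, and testing against the unit vectors $\xi=x/\nor{x}$ and $\eta=(I-P)Tx/\nor{(I-P)Tx}$ is exactly the standard definitional argument, and the normalization issue ($x\neq 0$) is handled properly. The paper omits the proof as ``straightforward,'' and this is precisely the argument it has in mind, so there is nothing further to compare.
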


\begin{defn}
\label{hyper}
If there exists $0<k<\infty$ such that 
$$d(T,\, \cl U) \leq k\,r_{\cl U}(T),\,\forall\,T\in \cl B(H,K),$$
then we say that the space $\cl U$ is hyperreflexive.
\end{defn}

Definition \ref{hyper} and Lemma \ref{denominator} lead to the following definition.

\begin{defn}
Let $\,\cl U\subseteq \cl B(H,K)$ be a reflexive subspace. We define 
$$k(\cl U)=\sup_{T\not \in\,  \cl U}\frac{d(T,\, \cl U) }{r_{\cl U}(T) }$$ to be the hyperreflexivity constant of $\,\cl U.$

\end{defn}

Therefore, $\cl U$ is hyperreflexive if and only if $k(\cl U)<\infty.$ Moreover, if $\cl U$ is hyperreflexive, then $k(\cl U)\geq 1$ (since $0<r_{\cl U}(T)\leq d(T,\,\cl U)).$

We recall some basic definitions and facts that will be used in the following sections.

A linear space $\cl M\subseteq \cl B(H,K)$ is called a ternary ring of operators (TRO) if $\cl M\,\cl M^*\,\cl M\subseteq \cl M,$ i.e. $XY^*Z\in \cl M$ for all $X, Y, Z\in \cl M.$ If additionally $\cl M$ is w$^*$-closed, then $\cl M$ is called a w$^*$-TRO.

\begin{defn} \cite{el-dual,ele3}. 
\label{weakly}
  If $\,\cl X\subseteq \cl B(H_1,H_2)$ and $\cl Y\subseteq \cl B(K_1,K_2)$ are $\mathrm{w}^*$-closed operator spaces, then we say that $\cl X$ is weakly TRO equivalent to $\cl Y$ if there exist $\mathrm{w}^*$-TRO's $\,\cl M_i\subseteq \cl B(H_i,K_i),\,i=1,2$ such that $$\cl X=\overline{[\cl M_2^*\,\cl Y\,\cl M_1]}^{\mathrm{w}^*}\,\,\,\,\text{and}\,\,\,\,\,\cl Y=\overline{[\cl M_2\,\cl X\,\cl M_1^*]}^{\mathrm{w}^*}.$$  
\end{defn}

\begin{defn}
 Let $u\colon \cl X\to \cl Y$ be a linear map between operator spaces. For each $n\in\bb N$ consider the matrix amplification $$u_n\colon M_n(\cl X)\to M_n(\cl Y),\,\, u_n((X_{i,j}))=(u(X_{i,j})).$$

\end{defn}
If $\,\sup_{n} ||u_n||<\infty, $  then we say that $u$ is completely bounded with norm $$||u||_{cb}:=\sup_{n} ||u_n||.$$

A $C^*$-algebra $\cl A$ is called injective if, for given $C^*$-algebras $\cl B$ and $\cl C$ with $\cl B\subseteq \cl C$ and a completely positive map $\phi\colon \cl B\to \cl A,$ there exists a completely positive map $\hat{\phi}\colon \cl C\to \cl A$ such that $\hat{\phi}|_{\cl B}=\phi.$ 

Let $\cl A,\cl B$ be $C^*$-algebras such that $\cl A\subseteq \cl B.$ A conditional expectation is a contractive and completely positive projection $\Phi\colon \cl B\to \cl A$ such that $$\Phi(ABA')=A\Phi(B)A',\,\,\forall\,A,A'\in\cl A,\,B\in \cl B,$$ i.e. $\Phi$ is an $\cl A$-bimodule map. 
In 1957, Tomiyama proved the following theorem.

\begin{thm} \cite{Tomi} 
   Let $\cl A$ and $\cl B$ be $C^*$-algebras such that $\cl A\subseteq \cl B$ and let $\Phi\colon \cl B\to \cl A$ be a projection of $\,\cl B$ onto $\cl A.$  Then the following statements are equivalent :
   \begin{enumerate}[(i)]
       \item $\Phi$ is a conditional expectation.
       \item $\Phi$ is contractive and completely positive.
       \item $\Phi$ is contractive.
   \end{enumerate}
\end{thm}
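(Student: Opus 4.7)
The implications (i) $\Rightarrow$ (ii) $\Rightarrow$ (iii) are immediate: the first is built into the stated definition of a conditional expectation, and the second is trivial. All of the content is in (iii) $\Rightarrow$ (i), and my plan is to first extract the $\cl A$-bimodule identity from sheer norm contractivity and then bootstrap to complete positivity.

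\textbf{Step 1 (Normalization).} If $\cl B$ is non-unital, pass to the unitization and extend $\Phi$ canonically; the extension remains a contractive projection. So we may assume $\cl A,\cl B$ share a common identity $1$, and since $\Phi$ fixes $\cl A$ pointwise we have $\Phi(1)=1$.

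\textbf{Step 2 (Bimodule property --- the heart of the argument).} The goal is $\Phi(ab)=a\Phi(b)$ for every $a\in\cl A$, $b\in\cl B$. Since every element of a unital $C^*$-algebra is a linear combination of four unitaries, it suffices to handle $a=u$ unitary. Setting $c=\Phi(ub)-u\Phi(b)\in\cl A$, I would apply contractivity of $\Phi$ to carefully chosen perturbations of the form $ub+\lambda a$ (with $a\in\cl A$ depending on $c$ and $\lambda\in\bb C$ a scalar parameter) and combine the resulting inequalities with the $C^*$-identity $\|x\|^2=\|x^*x\|$ to force $c=0$. An equivalent route is to compare $\Phi$ with the auxiliary contractive projection $\Psi_u(b)=u^*\Phi(ub)$ onto $\cl A$ and conclude $\Psi_u=\Phi$. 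The right-module identity follows symmetrically, yielding $\Phi(a_1ba_2)=a_1\Phi(b)a_2$ for all $a_1,a_2\in\cl A$, $b\in\cl B$.

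\textbf{Step 3 (Self-adjointness, positivity, complete positivity).} Self-adjointness of $\Phi$ is obtained by a parallel perturbation argument, exploiting the one-parameter unitary group $e^{itb}$ for self-adjoint $b\in\cl B$: the analytic function $t\mapsto\Phi(e^{itb})$ takes values in the unit ball of $\cl A$, and comparison of its expansion at $t=0$ with the bimodule identity of Step 2 forces $\Phi(b)^*=\Phi(b)$ whenever $b=b^*$. Positivity is then immediate: for $0\le b\le 1$, contractivity of $\Phi$ applied to $1-2b$ gives $\|1-2\Phi(b)\|\le 1$, and together with self-adjointness this places the spectrum of $\Phi(b)$ in $[0,1]$. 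For complete positivity, the amplification $\Phi_n=\Phi\otimes\id_{M_n}\colon M_n(\cl B)\to M_n(\cl A)$ is an $M_n(\cl A)$-bimodule projection fixing the identity; the bimodule structure lets one check that $\Phi_n$ is again contractive, and re-running the argument of this step for $\Phi_n$ yields $n$-positivity of $\Phi$ for every $n$, hence complete positivity, completing the verification that $\Phi$ is a conditional expectation.

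The \emph{main obstacle} is Step 2: extracting the algebraic bimodule identity from nothing more than a norm inequality. The difficulty is not conceptual but combinatorial --- one must choose exactly the right family of perturbations $ub+\lambda a$ and exploit the $C^*$-identity at precisely the right moment to turn a metric fact into an algebraic one. Everything in Steps 1 and 3 is then a relatively routine unpacking of the bimodule property and the unital contractive hypothesis.
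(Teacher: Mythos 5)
First, note that the paper does not prove this statement at all: it is quoted from Tomiyama with the citation \cite{Tomi}, so your attempt can only be measured against the standard argument, not against a proof in the text. Your reductions (i)$\Rightarrow$(ii)$\Rightarrow$(iii) and most of Step 3 are fine, but Step 2 — which, as you yourself say, is the entire content of the theorem — is not a proof. You announce that ``carefully chosen perturbations of the form $ub+\lambda a$'' combined with the $C^*$-identity will force $c=\Phi(ub)-u\Phi(b)$ to vanish, but you never exhibit the perturbations, and the mechanism that makes the known argument work is simply not available for a unitary $u$: in the classical Tomiyama trick one tests $\Phi$ against $pxp^{\perp}+t\,p^{\perp}\Phi(pxp^{\perp})p^{\perp}$ for a \emph{projection} $p$, and the whole point is that the cross terms in $\|\cdot\|^{2}=\|(\cdot)^{*}(\cdot)\|$ vanish because $p\,p^{\perp}=0$, giving $\|pxp^{\perp}+ta\|^{2}\le\|x\|^{2}+t^{2}\|a\|^{2}$ against the lower bound $(1+t)\|a\|$ coming from contractivity, whence $a=0$ as $t\to\infty$. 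A unitary has no such orthogonality, so the ``combinatorial'' choice you hope for is not just elusive — there is no known way to run the estimate at the level of unitaries. Your fallback, comparing $\Phi$ with $\Psi_{u}(b)=u^{*}\Phi(ub)$ and ``concluding $\Psi_{u}=\Phi$'', is also unjustified: contractive projections onto a $C^*$-subalgebra are wildly non-unique (every state is a contractive projection onto $\bb C 1$), so $\Psi_{u}=\Phi$ cannot come from any abstract uniqueness principle; it \emph{is} the bimodule identity you are trying to prove. The standard repair is to pass to the biduals: $\Phi^{**}\colon\cl B^{**}\to\cl A^{**}$ is again a contractive projection, $\cl A^{**}$ is a von Neumann algebra whose projections span a $\mathrm{w}^*$-dense subspace, one proves $\Phi^{**}(px)=p\,\Phi^{**}(x)$ for projections $p\in\cl A^{**}$ by the estimate above, and then $\mathrm{w}^*$-continuity of $\Phi^{**}$ gives the $\cl A$-bimodule property. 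Your Step 1 is likewise not innocuous: extending a merely contractive (not yet known to be positive) projection to unitizations ``canonically'' while preserving contractivity is not automatic, which is precisely why the literature handles the non-unital case through the bidual or through approximate-unit arguments.

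Two smaller points on Step 3. The self-adjointness/positivity part is essentially the standard proof that a unital contraction is positive (equivalently, compose with states of $\cl A$ and use that a norm-one unital functional is a state); it does not need the bimodule identity, and as a sketch it is acceptable. For complete positivity, however, your plan to show that $\Phi_{n}$ is again contractive ``using the bimodule structure'' and then re-run the positivity argument is backwards and under-argued: checking $\|\Phi_{n}\|\le1$ directly requires a nontrivial fact about computing norms in $M_{n}(\cl A)$ by compressions $a^{*}(\cdot)c$ with columns over $\cl A$, which you do not supply. The clean route, once positivity and the bimodule property are in hand, is the criterion that $(x_{ij})\ge 0$ in $M_{n}(\cl A)$ if and only if $\sum_{i,j}a_{i}^{*}x_{ij}a_{j}\ge 0$ for all $a_{1},\dots,a_{n}\in\cl A$: then $\sum_{i,j}a_{i}^{*}\Phi(b_{ij})a_{j}=\Phi\bigl(\sum_{i,j}a_{i}^{*}b_{ij}a_{j}\bigr)\ge 0$ for every positive $(b_{ij})\in M_{n}(\cl B)$, and complete positivity follows at once.
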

By combining the definition of an injective von Neumann algebra and the above theorem, we obtain the following important corollary.

\begin{cor}
 \label{ce}
 For a von Neumann algebra $\cl A$ acting on a Hilbert space $H$ the following statements are equivalent :
 \begin{enumerate}[(i)]
     \item $\cl A$ is injective.
     \item There is a projection $\Phi\colon \cl B(H)\to \cl B(H)$ of $\,\cl B(H)$ onto $\cl A$ such that $||\Phi||=1.$ 
 \end{enumerate}
 Moreover, $\Phi$ is an $\cl A$-bimodule map.
\end{cor}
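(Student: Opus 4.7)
The plan is to obtain each implication by a one-step reduction to a classical result, using Tomiyama's theorem as the bridge between the two formulations.

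For (i) $\Rightarrow$ (ii), I would view the identity map $\id_{\cl A}\colon \cl A \to \cl A$ as a (unital) completely positive map. Applying the definition of injectivity with $\cl B = \cl A$, $\cl C = \cl B(H)$, and $\phi = \id_{\cl A}$ yields a completely positive extension $\Phi\colon \cl B(H) \to \cl A$. By construction $\Phi|_{\cl A} = \id_{\cl A}$, so $\Phi$ is a projection onto $\cl A$. Since any unital completely positive map between $C^*$-algebras is contractive, $\|\Phi\| = \|\Phi(1)\| = 1$.

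For (ii) $\Rightarrow$ (i), together with the ``moreover'' assertion, let $\Phi\colon \cl B(H) \to \cl B(H)$ be a projection onto $\cl A$ with $\|\Phi\| = 1$. By Tomiyama's theorem stated above, $\Phi$ is a conditional expectation; in particular it is completely positive and an $\cl A$-bimodule map, which proves the ``moreover'' clause. To verify injectivity, let $\cl B \subseteq \cl C$ be $C^*$-algebras and $\phi\colon \cl B \to \cl A$ a completely positive map. Composing with the inclusion $\cl A \hookrightarrow \cl B(H)$ produces a completely positive map $\cl B \to \cl B(H)$, which extends by Arveson's extension theorem to a completely positive map $\tilde \phi\colon \cl C \to \cl B(H)$. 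Then $\hat \phi := \Phi \circ \tilde \phi\colon \cl C \to \cl A$ is completely positive and agrees with $\phi$ on $\cl B$, establishing injectivity.

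There is essentially no technical obstacle beyond careful bookkeeping: the argument rests entirely on Tomiyama's theorem (already available), Arveson's extension theorem, and the standard fact that a unital completely positive map has norm $1$. The content of the corollary is really the convenient reformulation of injectivity in terms of a norm-one projection onto $\cl A$, rather than any new technical ingredient.
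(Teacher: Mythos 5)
Your argument is correct and is exactly the standard combination the paper has in mind: it states the corollary without proof, as an immediate consequence of the definition of injectivity (applied to $\id_{\cl A}$ for one direction, together with injectivity of $\cl B(H)$, i.e. Arveson's extension theorem, for the other) and Tomiyama's theorem, which also yields the bimodule property. Nothing in your write-up deviates from that intended route, and the details you supply (unitality of $\Phi$ since $I_H\in\cl A$, hence $\|\Phi\|=1$, and $\hat\phi=\Phi\circ\tilde\phi$) are the right ones.
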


If $\cl A\subseteq \cl B(H)$ is a von Neumann algebra then $\cl A$ is injective if and only if $\cl A^\prime$ is injective, \cite[Corollary 3.8.6]{B-O}.

 \section{Weak Similarity property and hyperreflexivity}
 
The self-adjoint reflexive operator algebras are precisely the von Neumann algebras \cite{kl}.
In this section, using essentially Pisier's book \cite{pisier1},  we prove results connecting the weak similarity property with the notion of hyperreflexivity of von Neumann algebras. 

\begin{lem}
\label{extra}
    Let $\cl A$ and $\cl B$ be von Neumann algebras acting on the Hilbert spaces $H$ and $K,$ respectively, $\,\theta\colon \cl A^\prime \to \cl B^\prime$ be  a $*$-isomorphism and $$\cl M=\left\{T\in\cl B(H,K) \mid T A=\theta(A) T,\,\,\forall\,A\in\cl A^\prime\right\}.$$
    Then $\cl A=\overline{[\cl M^* \cl M]}^{\rm{w}^*}$ and $\,\cl B=\overline{[\cl M \cl M^*]}^{\rm{w}^*}.$
\end{lem}

\begin{proof}
    The proof of Theorem 3.3 in \cite{ele1} yields that 
    $$\cl M^* \cl B \cl M\subseteq \cl A,\,\,\cl M \cl A \cl M^*\subseteq \cl B.$$
    By Lemma 3.1 in the same article the space $\cl M$ is an essential TRO and according to \cite[Proposition 2.1]{ele1} it follows that $\cl A$ and $\cl B$ are weakly TRO equivalent (see Definition \ref{weakly}) via $\cl M,$ that is $$\cl A=\overline{[\cl M^* \cl B \cl M]}^{\rm{w^*}},\,\,\cl B=\overline{[\cl M \cl A \cl M^*]}^{\rm{w^*}}.$$
    Since  $\cl M=\overline{[\cl B \cl M]}^{\rm{w}^*}=\overline{[\cl M \cl A]}^{\rm{w}^*}$ we have
    $\cl A=\overline{[\cl M^* \cl M]}^{\rm{w}^*},\,\,\cl B=\overline{[\cl M \cl M^*]}^{\rm{w}^*}.$ \end{proof}

\begin{lem} \label{11} Let $\cl A$ and $\cl B$ be von Neumann algebras acting on the Hilbert spaces $H$ and $K,$ respectively. 
If $\,\pi\colon \cl A\rightarrow \cl B$ is an onto, $\mathrm{w}^*$-continuous and unital $*$-homomorphism and $J$ is an infinite cardinal 
 with $\dim H\leq |J|,$
then 
$$ k(\cl B^\prime \bar \otimes \cl B(\ell^2(J))) \leq k(\cl A^\prime \bar \otimes \cl B(\ell^2(J))) .$$ 
\end{lem}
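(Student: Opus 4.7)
The plan is to realize $\cl B$ concretely as a corner of an amplification of $\cl A$ and then to show that passing to such a corner cannot increase the hyperreflexivity constant, after tensoring both sides with $\cl B(\ell^2(I))$.

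First I would invoke the structure theorem for normal unital $*$-representations: since $\pi\colon \cl A\to \cl B$ is $\mathrm{w}^*$-continuous, unital, surjective and $\dim H\leq |I|$ with $I$ infinite, there exist a projection $p\in \cl A'\bar\otimes \cl B(\ell^2(I))$ and a unitary $W\colon H\to p(K\otimes \ell^2(I))$ such that $W\pi(A)W^*=p(A\otimes I_{\ell^2(I)})p$ for every $A\in\cl A$. Identifying $H$ with $p(K\otimes \ell^2(I))$ via $W$, the commutation theorem for reduced von Neumann algebras yields $\cl B'=p\bigl(\cl A'\bar\otimes \cl B(\ell^2(I))\bigr)p$ as a von Neumann algebra on $p(K\otimes \ell^2(I))$.

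Next I would amplify both sides by $\cl B(\ell^2(I))$. Using the unitary $\ell^2(I)\otimes \ell^2(I)\cong \ell^2(I)$ (which is available since $|I|^2=|I|$ for infinite $I$), I obtain a spatial identification of $\cl B'\bar\otimes \cl B(\ell^2(I))$, viewed on $p(K\otimes \ell^2(I))\otimes \ell^2(I)$, with an algebra of the form $P\,\bigl(\cl A'\bar\otimes \cl B(\ell^2(I))\bigr)\,P$ on $P(K\otimes \ell^2(I))$, where $P$ is a projection in $\cl A'\bar\otimes \cl B(\ell^2(I))$. Since spatial isomorphism preserves the hyperreflexivity constant, the lemma reduces to a compression inequality.

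The compression step is where the core argument lies: if $\cl U\subseteq \cl B(L)$ is a reflexive operator space and $P\in \cl U$ is a projection, then $k(P\cl U P)\leq k(\cl U)$. To establish this, I would pick $T\in \cl B(PL)\setminus P\cl U P$ and extend it by zero off $PL$ to $\tilde T\in \cl B(L)$. On one hand, replacing any candidate $U\in \cl U$ by its compression $PUP\in P\cl U P\subseteq \cl U$ can only decrease $\|\tilde T-U\|$, so $d(T,P\cl U P)=d(\tilde T,\cl U)$. On the other hand, if unit vectors $\xi,\eta\in L$ annihilate $\cl U$, then because $PUP\in \cl U$ for every $U\in \cl U$, the vectors $P\xi,P\eta$ annihilate $P\cl U P$, and moreover $\langle \tilde T\xi,\eta\rangle=\langle TP\xi,P\eta\rangle$; hence $r_{\cl U}(\tilde T)\leq r_{P\cl U P}(T)$. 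Applying hyperreflexivity of $\cl U$ to $\tilde T$ then yields $d(T,P\cl U P)\leq k(\cl U)\,r_{P\cl U P}(T)$, and specializing to $\cl U=\cl A'\bar\otimes \cl B(\ell^2(I))$ and $P$ as above completes the proof.

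The point I expect to be the most delicate is the first step, namely correctly fitting the amplification-compression realization of the normal surjective $*$-homomorphism $\pi$ into the dimension bound $\dim H\leq |I|$, together with the bookkeeping that lets one absorb the auxiliary $\cl B(\ell^2(I))$ factor via $\ell^2(I)\otimes \ell^2(I)\cong \ell^2(I)$. Once the right projection $P\in \cl A'\bar\otimes \cl B(\ell^2(I))$ has been produced, the compression inequality itself is essentially a direct computation using reflexivity and the fact that $P$ belongs to the ambient space.
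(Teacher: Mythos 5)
Your proposal is correct, but it takes a genuinely different route from the paper's. The paper works with the intertwiner TRO $\cl M=\{M\in\cl B(K,H)\mid MA=\pi(A)M\}$: using \cite{ele1} it shows $\cl B'=\overline{[\cl M\cl M^*]}^{\mathrm{w}^*}$, that $\Omega=\overline{[\cl M^*\cl M]}^{\mathrm{w}^*}=\cl A'P$ is an ideal of $\cl A'$ ($P$ central), that $\cl B'$ and $\Omega$ are weakly TRO equivalent, and then applies the stability theorem for hyperreflexivity under weak TRO equivalence \cite[Theorem 3.6]{ele2} to get $k(\cl B'\bar\otimes\cl B(\ell^2(I)))\le k(\Omega\bar\otimes\cl B(\ell^2(I)))$, finishing with the compression estimate of \cite[Lemma 1.3]{kl}. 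You bypass the TRO machinery entirely: you invoke the classical structure theorem for normal representations (amplification, reduction by a projection $p\in\cl A'\bar\otimes\cl B(\ell^2(I))$, spatial isomorphism), which is where the hypotheses $\dim H\le|I|$ and surjectivity of $\pi$ enter (the latter through $\cl B'=\pi(\cl A)'=p\bigl(\cl A'\bar\otimes\cl B(\ell^2(I))\bigr)p$), you absorb the auxiliary tensor factor via $\ell^2(I)\otimes\ell^2(I)\cong\ell^2(I)$, and you reprove the compression inequality $k(P\cl U P)\le k(\cl U)$ directly rather than citing \cite{kl}. Your route is more classical and self-contained, at the cost of redoing the amplification bookkeeping that the paper's TRO-equivalence theorem packages; the paper's route also keeps the compressing projection central in $\cl A'$, whereas yours is an arbitrary projection of $\cl A'\bar\otimes\cl B(\ell^2(I))$, which is fine because membership of $P$ in the von Neumann algebra is all the compression argument needs. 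Two small points to tighten: your compression lemma as stated (reflexive $\cl U$, $P\in\cl U$) tacitly uses $P\cl U P\subseteq\cl U$, harmless here since $\cl U=\cl A'\bar\otimes\cl B(\ell^2(I))$ is an algebra containing $P$; and in the step $r_{\cl U}(\tilde T)\le r_{P\cl U P}(T)$ the vectors $P\xi,P\eta$ must be normalized (or the corresponding term vanishes), which homogeneity of the defining inequality handles.
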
 
\begin{proof} We define the TRO $\cl M$ as follows: 
$$\cl M=\{M\in \cl B(H, K)\mid MA=\pi (A) M,\,\,\forall\,A\in\cl A\}.$$
We can easily prove that $\cl M\,\cl A^{\prime}\subseteq \cl M$ and $\cl M^*\,\cl B^{\prime}\subseteq \cl M^*$. Therefore, $\overline{[\cl M^*\cl M]}^{\mathrm{w}^*}$ and $\overline{[\cl M\cl M^*]}^{\mathrm{w}^*}$ are ideals of $\cl A^\prime$ and $\cl B^\prime,$ respectively. We define the map
$$\rho\colon \cl A\rightarrow \cl B(H\oplus K),\,\rho (A)=A\oplus \pi (A).$$ 
This is a $*$-isomorphism between the von Neumann algebras $\cl A$ and  $\rho  (\cl A)$. According to Lemma \ref{extra}, for  the TRO
$$\cl N=\{T\in \cl B(H,H\oplus  K)\mid T A=\rho  (A) T,\,\,\forall\,A\in\cl A\}$$ we have $$\cl A^\prime=\overline{[\cl N^* \cl N]}^{\rm{w}^*},\,\,\rho(\cl A)^\prime=\overline{[\cl N \cl N^*]}^{\rm{w}^*}.$$

However, by the proof of Theorem 3.2 in \cite{ele1} we have
$$\rho (\cl A)^\prime=\begin{pmatrix} \cl A^\prime & \cl M^* \\ \cl M  & \cl
 B^\prime\end{pmatrix}=\overline{[\cl N \cl N^*]}^{\rm{w}^*}.$$
Let $T=\begin{pmatrix}T_1\\
T_2\end{pmatrix}\in\cl B(H,H\oplus K)$, so $T_1\in\cl B(H)$ and $T_2\in\cl B(H,K)$. We have 
\begin{align*}
    T\in\cl N&\Leftrightarrow T A \xi=\rho(A) T \xi,\,\,\forall\,A\in\cl A,\,\,\forall\,\xi\in H\\&\Leftrightarrow\begin{pmatrix} T_1 A \xi\\
    T_2 A \xi\end{pmatrix}=\begin{pmatrix}A T_1 \xi\\
    \pi(A) T_2 \xi\end{pmatrix},\,\,\forall\,A\in\cl A,\,\,\forall\,\xi\in H\\&\Leftrightarrow T_1 A=A T_1,\,\,T_2 A=\pi(A) T_2,\,\,\forall\,A\in\cl A\\&\Leftrightarrow T_1\in\cl A^{\prime},\,T_2\in \cl M\\&\Leftrightarrow T\in\begin{pmatrix}\cl A^{\prime}\\
   \cl  M\end{pmatrix},
\end{align*} 
that is
\begin{equation}\label{ex1} \cl N=\begin{pmatrix} \cl A^\prime \\ \cl M \end{pmatrix}.
\end{equation}
Using (\ref{ex1}) and the fact that $\rho (\cl A)^\prime=\begin{pmatrix} \cl A^\prime & \cl M^* \\ \cl M  & \cl
 B^\prime\end{pmatrix}=\overline{[\cl N \cl N^*]}^{\rm{w}^*}$ we conclude that $$\cl B^\prime=\overline{[\cl M\cl M^*]}^{\mathrm{w}^*}.$$ 
Since $\Omega =\overline{[\cl M^*\cl M]}^{\mathrm{w}^*}$ is an ideal of $\cl A^\prime,$ there exists a projection $P$ in the centre of $\cl A$ 
such that $$\Omega =\cl A^\prime P.$$ We consider $\Omega $ as a  subspace of $\cl B(P(K)).$ Then 
$\cl B^\prime$ and $\Omega $ are weakly TRO equivalent in the sense of Definition \ref{weakly} and $J$ 
 has cardinality at least $\dim H.$ By \cite[Theorem 3.6]{ele2} we obtain 
$$k( \cl B^\prime \bar \otimes \cl B(\ell^2(J)) ) \leq k( \Omega  \bar \otimes \cl B(\ell^2(J)) ) .$$
However, $$\Omega  \bar \otimes \cl B(\ell^2(J)) =(P\otimes I)( \cl A^\prime \bar \otimes \cl B(\ell^2(J))  )(P\otimes I),$$ where $I$ is the identity operator of $\ell^2(J),$
 and thus by \cite[Lemma 1.3]{kl} we deduce that
\[k( \Omega  \bar \otimes \cl B(\ell^2(J)) ) \leq k(\cl A^\prime \bar \otimes \cl B(\ell^2(J))  ).\qedhere\]

\end{proof}

\begin{lem} \label{12} Let $\cl A$ be a von Neumann algebra such that $k_0=k(\cl A^\prime\bar \otimes \cl B(\ell^2(J))) <\infty $ 
for an infinite cardinal $J.$ For every normal representation $\pi\colon \cl A\rightarrow \cl B(H),$ where $H$ is a Hilbert space
 such that $\dim H\le |J|,$ and every $T\in\cl B(H),$ for the inner derivation 
$\delta=\delta_{\pi,\,T}$ 
 it holds that
$$\|\delta \|_{cb}\leq 2k_0 \|\delta\| .$$\end{lem}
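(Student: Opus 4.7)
The plan is to sandwich $\|\delta\|_{cb}$ between $2\,d(T,\pi(\cl A)')$ from above and $2k\,\|\delta\|$ via an intermediate bound $d(T,\pi(\cl A)')\leq k\,\|\delta\|$. First, I would establish $\|\delta\|_{cb}\leq 2\,d(T,\pi(\cl A)')$ by matrix amplification: the $n$-th amplification $\delta_n\colon M_n(\cl A)\to M_n(\cl B(H))$ is the inner derivation implemented by $T\otimes I_n$ with respect to the normal representation $\tilde\pi_n([A_{ij}])=[\pi(A_{ij})]$, and a standard computation shows the commutant of $M_n(\pi(\cl A))$ in $\cl B(H^n)$ equals the diagonal copy $\{S\otimes I_n:S\in\pi(\cl A)'\}$. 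Replacing $T$ by $T-S$ for any $S\in\pi(\cl A)'$ leaves $\delta_n$ unchanged, so $\|\delta_n\|\leq 2\|T-S\|$; taking infima and then the supremum over $n$ yields the bound.

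Next, to see $d(T,\pi(\cl A)')\leq k\,\|\delta\|$, I would first transfer the hyperreflexivity constant. Since $\pi\colon \cl A\twoheadrightarrow \pi(\cl A)$ is an onto, $\mathrm{w}^*$-continuous, unital $*$-homomorphism and $\dim H\leq |I|$, Lemma \ref{11} yields $k(\pi(\cl A)'\bar\otimes\cl B(\ell^2(I)))\leq k$. Furthermore, $\pi(\cl A)'$ arises as the corner $(I_H\otimes p)\bigl(\pi(\cl A)'\bar\otimes\cl B(\ell^2(I))\bigr)(I_H\otimes p)$ for a rank-one projection $p\in\cl B(\ell^2(I))$, so by Lemma 1.3 of \cite{kl}, $k(\pi(\cl A)')\leq k$. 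The key remaining step is an Arveson-type bound $r_{\pi(\cl A)'}(T)\leq \|\delta\|$. Given unit vectors $\xi,\eta\in H$ with $\langle S\xi,\eta\rangle=0$ for all $S\in\pi(\cl A)'$, let $P$ be the orthogonal projection onto $\overline{\pi(\cl A)'\xi}$. Invariance under $\pi(\cl A)'$ forces $P\in\pi(\cl A)''=\pi(\cl A)$, and $P\xi=\xi$, $P\eta=0$. Lifting $P$ via continuous functional calculus on a self-adjoint preimage to a contraction $A\in\cl A$ with $\pi(A)=P$, one obtains
$$|\langle T\xi,\eta\rangle|=|\langle (I-P)TP\,\xi,\eta\rangle|\leq \|(I-P)TP\|\leq \|[T,P]\|=\|\delta(A)\|\leq \|\delta\|,$$
where $\|(I-P)TP\|\leq \|[T,P]\|$ follows from the block decomposition $[T,P]=(I-P)TP-PT(I-P)$ whose summands act between complementary orthogonal subspaces. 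Hyperreflexivity of $\pi(\cl A)'$ then gives $d(T,\pi(\cl A)')\leq k\,r_{\pi(\cl A)'}(T)\leq k\,\|\delta\|$, which combined with the first step produces $\|\delta\|_{cb}\leq 2k\,\|\delta\|$.

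The main obstacle will be executing the Arveson-type step cleanly: verifying that $P\in\pi(\cl A)$ (via invariance plus the double commutant theorem), lifting $P$ to a contractive $A\in\cl A$ using functional calculus, and extracting the block-norm inequality $\|(I-P)TP\|\leq\|[T,P]\|$. Everything else is essentially routine bookkeeping with amplifications and corners.
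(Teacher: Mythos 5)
Your proposal is correct and follows essentially the same route as the paper: Lemma \ref{11} plus compression by a rank-one projection gives $k(\pi(\cl A)^\prime)\le k$, the amplification trick gives $\|\delta\|_{cb}\le 2\,d(T,\pi(\cl A)^\prime)$, and hyperreflexivity combined with $r_{\pi(\cl A)^\prime}(T)\le\|\delta\|$ closes the chain. The only difference is cosmetic: where you prove $r_{\pi(\cl A)^\prime}(T)\le\|\delta\|$ by hand (projection onto $\overline{\pi(\cl A)^\prime\xi}$, the double commutant theorem, and lifting projections through $\pi$), the paper quotes the formula $r_{\pi(\cl A)^\prime}(T)=\sup\left\{\|P^\bot TP\|\mid P\in\pr{\pi(\cl A)}\right\}$ from \cite[Lemma 1.2]{ele2} and then applies the same commutator estimate.
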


\begin{proof}   Let $\pi\colon \cl A\to \cl B(H)$ be a normal representation of $\cl A$ on a Hilbert space $H$ such that $\dim H\le |J|$ and $\,T\in\cl B(H).$ By Lemma \ref{11} we have 
$k( \pi (\cl A)^\prime \bar \otimes \cl B(\ell^2(J))) \leq k_0.$ Therefore, $k(\pi (\cl A)^\prime )\leq k_0$ and 

\begin{equation}\label{xx} d(T,\,\pi (\cl A)^\prime )\leq k_0 \,r_{\pi (\cl A)^\prime}(T) 
\end{equation}

Let $n\in\bb N$ and $(A_{i,j})\in Ball(M_{n}(\cl A)).$ For all $X\in \pi(\cl A)^{\prime}$ we have 
 \begin{align*} \|\delta_n((A_{i,j}))\|_{M_{n}(\cl B(H))}&=\|(\delta(A_{i,j}) )\| _{M_{n}(\cl B(H))} \\&= \|(T\pi (A_{i,j})-\pi (A_{i,j})T )\|_{M_{n}(\cl B(H))} \\&=
 \|((T-X)\pi (A_{i,j}))-(\pi (A_{i,j})(T-X) )\|_{M_{n}(\cl B(H))} \\&\leq 2\|T-X\|\|(A_{i,j})\|_{M_{n}(\cl A)}.
\end{align*}
We deduce that $\|\delta\|_{cb}\leq 2\,\|T-X\|,\,\forall\,X\in\pi(\cl A)^{\prime},$ which implies that \begin{equation} \label{2.3}
    \|\delta\|_{cb}\leq 2\,d(T,\,\pi(\cl A)^{\prime}).
\end{equation}
On the other hand, by \cite[Lemma 1.2]{ele2} it holds that
\begin{align*} r_{\pi (\cl A)^\prime}(T)&=\sup\big\{\|P^\bot TP\| \mid  P\in \pr{\pi (\cl A)}\big\}\\&\leq 
\sup\big\{\|TP-PT\|\mid P\in \pr{\pi (\cl A)}\big\}\\&\leq  \sup\big\{\|T\pi (A)-\pi (A)T\| \mid A\in Ball(\cl A)\big\}\\&=\|\delta \|.
\end{align*}
The last inequality, together with (\ref{xx}) and  (\ref{2.3}) implies  that \[\|\delta \|_{cb}\leq 2k_0 \|\delta\|. \qedhere\] 

\end{proof}

\begin{lem}\label{13} Let $\cl A$ be a von Neumann algebra such that $ k(\cl A^\prime\bar \otimes \cl B(\ell^2(J))) <\infty $ 
for an infinite cardinal $J.$ Then there exists a nondecreasing map $f\colon \mathbb{R}_+ \rightarrow \mathbb{R}_+$ 
such that for every normal representation $\pi\colon \cl A\rightarrow \cl B(H)$ such that $\dim H\le |J|$
and every invertible operator 
$S\in \cl B(H)$ it holds that $\|u_S\|_{cb}\leq f(\|u_S\|),$ where $$u_S(\cdot)=S^{-1}\,\pi (\cdot)\,S.$$
\end{lem}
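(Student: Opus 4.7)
My plan is to use the algebraic identity
$$u_S(A) - \pi(A) \;=\; -S^{-1}\delta_{\pi,S}(A),\qquad A\in\cl A,$$
where $\delta_{\pi,S}(A) = S\pi(A) - \pi(A)S$ is the inner derivation of $\pi$ implemented by $S$, in order to reduce the estimate on $\|u_S\|_{cb}$ to the completely bounded bound of Lemma~\ref{12}. The intertwining identity $\pi(A)S = Su_S(A)$ gives $\|\delta_{\pi,S}\| \le (1+\|u_S\|)\|S\|$, and since $\dim H\le|I|$, Lemma~\ref{11} shows that $\pi(\cl A)^\prime\bar\otimes\cl B(\ell^2(I))$ has hyperreflexivity constant at most $k$, so Lemma~\ref{12} applied to $\pi$ yields $\|\delta_{\pi,S}\|_{cb}\le 2k\|\delta_{\pi,S}\|$. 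Combining,
$$\|u_S\|_{cb}\;\le\;1+\|S^{-1}\|\,\|\delta_{\pi,S}\|_{cb}\;\le\;1+2k(1+\|u_S\|)\,\|S\|\,\|S^{-1}\|.$$

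To convert this into a bound depending only on $M:=\|u_S\|$, I would exploit that $u_{XS}=u_S$ for every invertible $X\in\pi(\cl A)^\prime$, so that $S$ can be freely replaced within its $\pi(\cl A)^\prime$-orbit. The task is then to show that within this orbit there is a representative $XS$ with $\|XS\|\,\|(XS)^{-1}\|\le g(M)$ for some nondecreasing function $g$; equivalently, that the similarity distance of $u_S$ with respect to $\pi$ is bounded by a function of $M$. To construct $X$, I would run an iterative nearest-point scheme inside $\pi(\cl A)^\prime$: given the current similarity $S_n$, hyperreflexivity gives $d(S_n,\pi(\cl A)^\prime)\le k\,r_{\pi(\cl A)^\prime}(S_n)\le k\|\delta_{\pi,S_n}\|$, hence a correction $Y_n\in\pi(\cl A)^\prime$ differing from $S_n$ by a quantity controlled by $M$ and $\|S_n\|$. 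After a small perturbation to guarantee invertibility (legitimate because, by Lemma~\ref{11}, the hyperreflexivity constant of $\pi(\cl A)^\prime\bar\otimes\cl B(\ell^2(I))$ is at most $k$, so the amplified picture provides the required room), one sets $S_{n+1}=Y_n^{-1}S_n$ and continues. Defining $f(M):=1+2k(1+M)g(M)$ then produces the desired nondecreasing function.

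The hard part is the quantitative control of this iteration: one must track the norms of the successive corrections to ensure the product $\cdots Y_1^{-1}Y_0^{-1}$ converges to an invertible element of $\pi(\cl A)^\prime$ and that the condition number of the limiting similarity is bounded by $g(M)$. The infinite cardinality hypothesis on $I$ is crucial here because it allows one to move between $\pi$ and its amplifications without altering the hyperreflexivity constant, in line with the arguments of Pisier's monograph \cite{pisier1} which the proof ultimately follows.
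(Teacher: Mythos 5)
Your opening reduction is fine: $u_S(A)-\pi(A)=-S^{-1}\delta_{\pi,S}(A)$, $\|\delta_{\pi,S}\|\le\|S\|(1+\|u_S\|)$, and Lemmas \ref{11}--\ref{12} give $\|u_S\|_{cb}\le 1+2k(1+\|u_S\|)\,\|S\|\,\|S^{-1}\|$. But this is only the easy part, and the way you propose to remove the factor $\|S\|\,\|S^{-1}\|$ is where the proof breaks down. The claim you defer to an "iterative nearest-point scheme" --- that the orbit $\{XS: X\in\pi(\cl A)^\prime \text{ invertible}\}$ contains a representative whose condition number is bounded by a function of $M=\|u_S\|$ --- is not an auxiliary step: by Paulsen's similarity theorem (together with the standard fact that similar $*$-representations are unitarily equivalent, so a similarity realizing $\|u_S\|_{cb}$ can be modified by a unitary to intertwine with $\pi$ itself), that claim is \emph{equivalent} to the conclusion $\|u_S\|_{cb}\le g(M)$. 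So you have not reduced the lemma to anything; you have restated it, and if the orbit claim were available your first paragraph would be superfluous, since $\|u_S\|_{cb}\le\|XS\|\,\|(XS)^{-1}\|$ directly.

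The iteration itself, as described, has no mechanism that could make it work. Hyperreflexivity gives $d(S_n,\pi(\cl A)^\prime)\le k\,r_{\pi(\cl A)^\prime}(S_n)\le k\|\delta_{\pi,S_n}\|$, but $\|\delta_{\pi,S_n}\|\le\|S_n\|(1+M)$ is of the same order as $\|S_n\|$, so the "correction" $Y_n$ is not small relative to $S_n$ in any useful sense; there is no contraction estimate, no monotone quantity showing the condition numbers of $S_{n+1}=Y_n^{-1}S_n$ improve, no argument that the nearest (or a nearby) element of $\pi(\cl A)^\prime$ can be taken invertible with a quantitative bound on $\|Y_n^{-1}\|$ (the remark that "the amplified picture provides the required room" is not an estimate), and no proof that the product of corrections converges to an invertible element of $\pi(\cl A)^\prime$. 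This is precisely the content the paper does not reprove but imports: its proof of Lemma \ref{13} is the argument for the implication (v)$\Rightarrow$(iii) of \cite[Theorem 7.21]{pisier1}, which, after reducing to $S$ positive via the polar decomposition, converts the derivation estimate of Lemma \ref{12} into a bound on $\|u_S\|_{cb}$ depending only on $\|u_S\|$ without ever attempting to re-choose the similarity. As it stands, your proposal establishes only the trivial estimate involving $\|S\|\,\|S^{-1}\|$ and leaves the actual assertion of the lemma unproved.
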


 The proof of this lemma, which also uses Lemma \ref{12}, is the same as that of the implication  (v)$\Rightarrow$(iii) in \cite[Theorem 7.21]{pisier1}, so we omit it.

\begin{lem} \label{14} Let $\cl A$ be a von Neumann algebra such that $ k(\cl A^\prime\bar \otimes \cl B(\ell^2(J))) <\infty $ 
for an infinite cardinal $J.$ If $\,H$ is a Hilbert space, $u\colon \cl A\rightarrow \cl B(H)$ is a $\mathrm{w}^*$-continuous, unital and bounded homomorphism 
and either $\dim H\le |J|$ or $u(\cl A)$ is a $\mathrm{w}^*$-separable space, 
then $u$ is similar to a $*$-homomorphism of $\cl A.$
\end{lem}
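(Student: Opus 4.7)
The plan is to prove that $u$ is completely bounded and then invoke Haagerup's theorem, which asserts that every completely bounded unital homomorphism from a $C^*$-algebra into $\cl B(H)$ is similar to a $*$-homomorphism. Since $u$ is w*-continuous and conjugation by a bounded invertible operator preserves w*-continuity, the resulting $*$-homomorphism will automatically be normal, giving the conclusion of the lemma.

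First I would reduce both cases to the single case $\dim H \le |I|$. When $u(\cl A)$ is w*-separable, a cyclic/multiplicity decomposition of the w*-closure of $u(\cl A) \subseteq \cl B(H)$ allows one to restrict $u$ to an $u(\cl A)$-invariant separable subspace $H_0 \subseteq H$ without changing the cb-norm: indeed, $\|u\|_{cb} = \sup_n \|u_n\|$ is determined by matrix entries drawn from countable families of vectors, which may be chosen inside such an $H_0$. Since $I$ is infinite, $\dim H_0 \le \aleph_0 \le |I|$.

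Now assume $\dim H \le |I|$; the goal is to bound $\|u\|_{cb} \le f(\|u\|)$ where $f$ is the nondecreasing function from Lemma \ref{13}. The strategy is to realize $u$, after adjoining an auxiliary faithful normal $*$-representation $\pi \colon \cl A \to \cl B(K)$ of multiplicity at least $|I|$, as a similarity of a normal $*$-representation. Concretely, the map $\rho = u \oplus \pi \colon \cl A \to \cl B(H \oplus K)$ is a w*-continuous unital bounded homomorphism, and the task is to produce an invertible $S \in \cl B(H \oplus K)$ such that $S \rho(\cdot) S^{-1}$ is a normal $*$-representation. This amounts to solving a bimodule derivation equation built from the off-diagonal difference $u(\cdot)-\pi(\cdot)$. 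Recasting this bimodule derivation as an ordinary bounded derivation by the standard $2\times 2$-matrix trick, one can then invoke Christensen's theorem: the hyperreflexivity hypothesis $k(\cl A^\prime \bar\otimes \cl B(\ell^2(I))) < \infty$, combined with Lemma \ref{11} to transfer the bound to the commutant of $\rho(\cl A)$ at the appropriate level of amplification, guarantees innerness of this derivation, which yields the required $S$. Applying Lemma \ref{13} to the resulting similarity of a normal representation gives $\|\rho\|_{cb} \le f(\|\rho\|)$, and since $u$ is a direct summand of $\rho$, the same bound passes to $\|u\|_{cb}$.

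Once $\|u\|_{cb} < \infty$ is established, Haagerup's theorem produces an invertible $R \in \cl B(H)$ with $\pi_u(\cdot) = R u(\cdot) R^{-1}$ a $*$-homomorphism; the w*-continuity of $u$ and of conjugation by $R$ forces $\pi_u$ to be normal. The main obstacle in this plan is the derivation-theoretic construction of the similarity $S$ in the middle step: one must carefully set up the bimodule derivation so that its innerness is controlled precisely by the hyperreflexivity of $\cl A^\prime \bar\otimes \cl B(\ell^2(I))$ rather than by some larger amplification, which is the role of Lemma \ref{11}.
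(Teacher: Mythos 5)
Your overall frame (prove $\|u\|_{cb}<\infty$, then invoke Haagerup's theorem, with w*-continuity giving normality of the resulting $*$-homomorphism) agrees with the paper, and your reduction of the w*-separable case to small-dimensional subspaces is in the right spirit. But the central step of your plan has a genuine gap. Lemma \ref{13} only applies to maps that are \emph{already} of the form $S^{-1}\pi(\cdot)S$ with $\pi$ a normal representation, so before you can use it you must produce a similarity — and your proposed construction of $S$ does not work. The off-diagonal difference $u(\cdot)-\pi(\cdot)$ is not a derivation with respect to anything (for a general bounded homomorphism $u$ there is no identity of Leibniz type it satisfies), so there is no ``bimodule derivation equation'' whose inner solution gives the similarity; the familiar $2\times 2$ trick goes the other way (from a derivation $\delta$ one builds the homomorphism $A\mapsto\begin{pmatrix}\pi(A)&\delta(A)\\ 0&\pi(A)\end{pmatrix}$, as in Lemma \ref{100}), and passing from ``derivations are inner'' to ``bounded homomorphisms are similar to $*$-homomorphisms'' is Kirchberg's theorem, not an elementary matrix manipulation. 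Moreover, the hyperreflexivity hypothesis, via Christensen-type arguments and Lemma \ref{12}, controls derivations with respect to \emph{normal $*$-representations} of $\cl A$; it says nothing directly about $\rho=u\oplus\pi$, whose range is not self-adjoint, and Lemma \ref{11} cannot be applied to $\rho$ since it requires an onto w*-continuous $*$-homomorphism. As written, the step is circular: it needs $u$ (or $\rho$) to be similar to a normal representation, which is what is being proved.

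The paper avoids this by localizing: for each finite set $J\subseteq H$ it restricts $u$ to the cyclic subspace $H_J=\overline{[u(\cl A)\xi_j\mid j\in J]}$, where Haagerup's \emph{unconditional} theorem on homomorphisms with a finite cyclic set gives that $u_J$ is completely bounded, hence of the form $S^{-1}\pi_J(\cdot)S$ with $\pi_J$ normal (and $\dim H_J\le|I|$ in either of the two hypotheses). Only then does the hyperreflexivity hypothesis enter, through Lemma \ref{13}, to make the bound \emph{uniform}: $\|u_J\|_{cb}\le f(\|u_J\|)\le f(\|u\|)$. Since $\|u_n\|\le\sup_{|J|=n}\|u_J\|_{cb}$, these local bounds glue to $\|u\|_{cb}\le f(\|u\|)$. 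This use of Haagerup's cyclic result to get the similarity for free on each finitely generated piece, reserving hyperreflexivity solely for uniformity, is the key idea your proposal is missing; without it (or a correct substitute) the argument does not go through.
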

\begin{proof} Let $(\xi_f)_{f\in F}$ be a finite subset of $H.$ We set  
$$H_F=\overline{[u(\cl A)\xi _f \mid f\in F]}^{\|\cdot\|}\subseteq H.$$ Observe that if $\dim H\le |J|,$ 
then the same holds for $H_F.$  Additionally, if $u(\cl A)$ is a w*-separable space, then $H_F$ is separable, 
 and thus $\dim H_F\le |J|$. Let
 $$u_F\colon \cl A\rightarrow \cl B(H_F),\,\,u_F(A)=u(A)|_{H_F}.$$ This map is a w*-continuous unital bounded homomorphism. 
Since $u_F$ admits a finite cyclic set, it is completely bounded \cite[Proposition 1.9]{haag}, so there exists an invertible operator $S\in \cl B(H_F)$ 
such that the map $$\pi _F(\cdot)=S^{-1}\,u_F(\cdot)\,S$$ is a normal representation of $\cl A.$ By Lemma \ref{13} 
 there exists a nondecreasing map $f\colon \mathbb{R}_+ \rightarrow \mathbb{R}_+$ such that  
$$\|u_F\|_{cb}\leq f(\|u_F\|)\leq f(\|u\|).$$
Considering for each $n\in\bb{N}$ the amplification  $u_n\colon M_{n}(\cl A)\rightarrow M_{n}(\cl B(H))$ we obtain  
$$\|u_n\|\leq \sup_{F\subseteq H,\,|F|=n}\|u_F\|_{cb} \leq f(\|u\|) .$$ 
Therefore, $u$ is completely bounded and \[\|u\|_{cb}\leq f(\|u\|) .\qedhere\]

\end{proof}

\begin{lem}\label{100} Let $\cl A$ be a von Neumann algebra acting on a Hilbert space $K$ 
and let $J$ be an infinite cardinal such that $\dim K\leq |J|$ and such that 
 every $\mathrm{w}^*$-continuous, unital and bounded homomorphism $u\colon \cl A\rightarrow \cl B(H),$ where $H$ is a Hilbert space 
with  $\dim H\leq |J|,$ is similar to a $*$-homomorphism of $\cl A.$ Then $$k ( \cl A^\prime\bar \otimes \cl B( \ell^2(J) )) <\infty 
.$$
\end{lem}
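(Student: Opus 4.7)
The plan is to invoke Christensen's theorem \cite{chris} (recalled in the introduction), which reduces hyperreflexivity of $\cl A'\bar\otimes \cl B(\ell^{2}(I))$ to showing that every bounded derivation from its commutant into the ambient $\cl B(H)$ is inner. Concretely, put $H=K\otimes \ell^{2}(I)$ and consider the faithful normal representation $\pi\colon \cl A\to \cl B(H)$ given by $\pi(A)=A\otimes I_{\ell^{2}(I)}$, so that $\pi(\cl A)'=\cl A'\bar\otimes \cl B(\ell^{2}(I))$ and $\dim H=|I|$ (since $\dim K\leq|I|$ and $|I|$ is infinite). It therefore suffices to prove that every bounded derivation $\delta\colon \cl A\to \cl B(H)$ with respect to $\pi$ has the form $\delta_{\pi,T}$ for some $T\in \cl B(H)$.

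Given such a $\delta$, which is automatically w*-continuous by the standard automatic continuity result for derivations of a von Neumann algebra into a normal dual bimodule, I would feed it through the matrix trick: define
\[
u\colon \cl A \to \cl B(H\oplus H),\qquad u(A)=\begin{pmatrix}\pi(A)&\delta(A)\\ 0&\pi(A)\end{pmatrix}.
\]
The derivation identity for $\delta$ makes $u$ a unital homomorphism; it is w*-continuous, bounded by $1+\|\delta\|$, and $\dim(H\oplus H)=2|I|=|I|$. The hypothesis then supplies an invertible $S\in \cl B(H\oplus H)$ and a $*$-homomorphism $\sigma\colon \cl A\to \cl B(H\oplus H)$ with $u(A)=S\sigma(A)S^{-1}$ for every $A\in \cl A$.

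The final step is to read off $T$ from this similarity. The subspace $V:=H\oplus 0$ is $u$-invariant by the triangular shape of $u$, so $W:=S^{-1}V$ is $\sigma$-invariant; because $\sigma$ is a $*$-representation, $W$ is in fact reducing, and the orthogonal projection $Q$ onto $W$ lies in $\sigma(\cl A)'$. Consequently the idempotent $R:=SQS^{-1}$, whose range equals $V$, commutes with $u(\cl A)$. Any idempotent in $\cl B(H\oplus H)$ with range $V$ has the block form $R=\bigl(\begin{smallmatrix} I & T\\ 0 & 0\end{smallmatrix}\bigr)$ for a unique $T\in \cl B(H)$, and comparing the $(1,2)$-blocks of the identity $Ru(A)=u(A)R$ yields $\delta(A)=\pi(A)T-T\pi(A)=\delta_{\pi,-T}(A)$, so $\delta$ is inner. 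Christensen's theorem then gives $k(\cl A'\bar\otimes \cl B(\ell^{2}(I)))<\infty$.

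The main obstacle is this last extraction of $T$: the key observations are that a $\sigma$-invariant subspace is automatically reducing (because $\sigma$ is $*$-preserving) and that the resulting idempotent in $\cl B(H\oplus H)$ with range $V$ is forced into exactly the block form needed so that its commutation with $u(\cl A)$ literally expresses $\delta$ as a coboundary. The other ingredients, namely the automatic normality of $\delta$ and the cardinal arithmetic $\dim(H\oplus H)=|I|$, are routine.
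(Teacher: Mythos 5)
Your argument is correct, and its core is the same as the paper's: both proofs reduce the hyperreflexivity of $\cl A'\bar\otimes\cl B(\ell^2(I))$ to a statement about bounded derivations of $\cl A\bar\otimes\bb C\,Id_{\ell^2(I)}$, build from such a derivation the upper-triangular $\mathrm{w}^*$-continuous unital homomorphism $u(A)=\bigl(\begin{smallmatrix}\pi(A)&\delta(A)\\ 0&\pi(A)\end{smallmatrix}\bigr)$, check the cardinality condition, and apply the hypothesis to $u$. Where you diverge is in the two bookkeeping steps at either end. The paper's reduction is to the statement that every bounded derivation is \emph{completely bounded} (citing the notes and remarks of Chapter 4 of Pisier's book), and it closes the loop by quoting Haagerup's theorem that a homomorphism similar to a $*$-homomorphism is completely bounded, so that $\delta$ inherits complete boundedness from $u$. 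You instead reduce to \emph{innerness} of derivations via Christensen's theorem (as quoted in the introduction), and you extract the implementing operator by hand: the $u$-invariant subspace $H\oplus 0$ becomes reducing for the $*$-homomorphism $\sigma=S^{-1}u(\cdot)S$, the corresponding projection conjugates back to an idempotent of the form $\bigl(\begin{smallmatrix}I&T\\ 0&0\end{smallmatrix}\bigr)$ commuting with $u(\cl A)$, and the $(1,2)$-entry of the commutation relation exhibits $\delta$ as $\delta_{\pi,-T}$. Your finishing argument is more self-contained and elementary (no appeal to Haagerup's similarity-implies-cb theorem, only the classical fact that invariant subspaces of $*$-representations reduce), at the cost of leaning on Christensen's equivalence; the paper's route is shorter on the page but outsources both the derivation criterion and the passage from similarity to the derivation estimate to the literature. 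Minor points in your write-up that deserve a word but are easily supplied: $u$ is unital because $\delta(1)=0$ for any derivation with respect to a unital $\pi$, and the $\mathrm{w}^*$-continuity of $\delta$ is exactly the automatic normality result of Ringrose that the paper also invokes.
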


\begin{proof} Let $\cl M=\cl A\bar \otimes \bb C I_{ \ell^2(J) },$ where $I_{\ell^2(J)}$ is the identity operator of $\ell^2(J).$ According to the notes and remarks of \cite[Chapter 4]{pisier1}, it suffices to prove that 
if $\,\delta \colon \cl M\rightarrow \cl B( K\otimes \ell^2(J) )$ is a bounded derivation, then $\delta $ 
is completely bounded. We define the map 

$$u\colon \cl A \rightarrow \cl B ( (K \otimes  \ell^2(J)) \oplus (K \otimes  \ell^2(J))  ),\,\,u(A)=\begin{pmatrix}
    A\otimes I_{ \ell^2(J)}  & \delta (A\otimes 
I_{ \ell^2(J)} )\\ 0 & A\otimes I_{ \ell^2(J) }
\end{pmatrix}.$$
This map is a unital bounded homomorphism. By \cite[Theorem 4]{ring},  
$\,\delta $ is w*-continuous, and hence $u$ is w*-continuous. Since $$\dim \left( \left (K\otimes \ell^2(J) )\oplus (K \otimes  \ell^2(J)\right) \right)\leq 
2\dim K\cdot|J|=|J|,$$ by our assumptions $u$ is similar to a $*$-homomorphism of $\cl A.$ Theorem 1.10 in \cite{haag} implies that $u$ is completely bounded, so 
$\delta $ 
is completely bounded.
\end{proof}

\begin{lem}\label{102} Let $J_1$ and $J_2$ be cardinals such that $|J_1|\leq |J_2|.$ If $\cl X\subseteq \cl B(H)$ is a $\mathrm{w}^*$-closed operator space, where $H$ is a Hilbert space, then $$k(\cl X  \bar \otimes \cl B(\ell^2(J_1)))\leq k(\cl X  \bar \otimes \cl B(\ell^2(J_2))).$$
\end{lem}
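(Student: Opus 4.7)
The plan is to realize $\cl X\bar\otimes \cl B(\ell^2(J))$ as a corner of $\cl X\bar\otimes \cl B(\ell^2(I))$ and then argue directly that this compression cannot increase the hyperreflexivity constant, in the spirit of the use of \cite[Lemma 1.3]{kl} in the proof of Lemma \ref{11}. Since $|J|\le |I|$, fix an isometry $V\colon \ell^2(J)\to \ell^2(I)$ and let $P=VV^{*}\in \cl B(\ell^2(I))$ be the projection onto its range, so that $V^{*}V=I_{\ell^2(J)}$. Set $\tilde P=I_H\otimes P$. The map
$$\iota\colon \cl X\bar\otimes \cl B(\ell^2(J))\to \cl X\bar\otimes \cl B(\ell^2(I)),\quad \iota(S)=(I_H\otimes V)\,S\,(I_H\otimes V^{*}),$$
is a complete isometry with image $\tilde P(\cl X\bar\otimes \cl B(\ell^2(I)))\tilde P$, and conversely $T\mapsto (I_H\otimes V^{*})T(I_H\otimes V)$ is a contraction from $\cl X\bar\otimes \cl B(\ell^2(I))$ onto $\cl X\bar\otimes \cl B(\ell^2(J))$. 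The identification uses $(I_H\otimes V)(X\otimes A)(I_H\otimes V^{*})=X\otimes VAV^{*}$ together with $VAV^{*}$ ranging over $P\cl B(\ell^2(I))P$ as $A$ ranges over $\cl B(\ell^2(J))$.

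Fix any $T\in \cl B(H\otimes \ell^2(J))$ not in $\cl X\bar\otimes \cl B(\ell^2(J))$, and set $\tilde T=\iota(T)$. The first estimate to establish is
$$d(T,\,\cl X\bar\otimes \cl B(\ell^2(J)))\le d(\tilde T,\,\cl X\bar\otimes \cl B(\ell^2(I))).$$
Indeed, for any $S$ in the larger space, $S^{\flat}=(I_H\otimes V^{*})S(I_H\otimes V)$ lies in the smaller space and, using $(I_H\otimes V^{*})\tilde T(I_H\otimes V)=T$, one gets $\|T-S^{\flat}\|=\|(I_H\otimes V^{*})(\tilde T-S)(I_H\otimes V)\|\le \|\tilde T-S\|$. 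The second estimate is
$$r_{\cl X\bar\otimes \cl B(\ell^2(I))}(\tilde T)\le r_{\cl X\bar\otimes \cl B(\ell^2(J))}(T).$$
Given unit vectors $\xi,\eta\in H\otimes \ell^2(I)$ annihilating the larger space, set $\xi_J=(I_H\otimes V^{*})\xi$ and $\eta_J=(I_H\otimes V^{*})\eta$. Then $\langle \tilde T\xi,\eta\rangle=\langle T\xi_J,\eta_J\rangle$ and, for every $S^\prime\in \cl X\bar\otimes \cl B(\ell^2(J))$, the annihilation $\langle S^\prime\xi_J,\eta_J\rangle=\langle \iota(S^\prime)\xi,\eta\rangle=0$ holds because $\iota(S^\prime)$ is a genuine element of the larger tensor product.

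Combining the two estimates with the hyperreflexivity bound $d(\tilde T,\cl X\bar\otimes \cl B(\ell^2(I)))\le k(\cl X\bar\otimes \cl B(\ell^2(I)))\cdot r_{\cl X\bar\otimes \cl B(\ell^2(I))}(\tilde T)$ yields
$$d(T,\,\cl X\bar\otimes \cl B(\ell^2(J)))\le k(\cl X\bar\otimes \cl B(\ell^2(I)))\cdot r_{\cl X\bar\otimes \cl B(\ell^2(J))}(T),$$
which gives the claim (and, as a bonus, shows $\cl X\bar\otimes \cl B(\ell^2(J))$ is reflexive whenever the right-hand side is finite). The only subtle step is the rank-one functional transfer in the second estimate: one must verify that the annihilation condition imposed by the larger space descends to the smaller one, and this is precisely what the containment $\iota(\cl X\bar\otimes \cl B(\ell^2(J)))\subseteq \cl X\bar\otimes \cl B(\ell^2(I))$ provides.
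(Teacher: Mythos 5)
Your proof is correct and rests on the same idea as the paper's: realize $\cl X\bar\otimes \cl B(\ell^2(J))$ as the corner $(I_H\otimes P)\bigl(\cl X\bar\otimes \cl B(\ell^2(I))\bigr)(I_H\otimes P)$ coming from viewing $\ell^2(J)$ inside $\ell^2(I)$. The only difference is what happens after that identification: the paper immediately invokes \cite[Lemma 1.3]{kl} to conclude that passing to such a corner cannot increase the hyperreflexivity constant, whereas you prove that compression estimate from scratch, via the two inequalities $d(T,\cl X\bar\otimes\cl B(\ell^2(J)))\le d(\tilde T,\cl X\bar\otimes\cl B(\ell^2(I)))$ and $r_{\cl X\bar\otimes\cl B(\ell^2(I))}(\tilde T)\le r_{\cl X\bar\otimes\cl B(\ell^2(J))}(T)$. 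So your argument is a self-contained, more elementary version of the same route; what it buys is independence from the Kraus--Larson lemma (and it reproves that lemma in this special case), at the cost of a bit more bookkeeping. One small point to tidy up in the second estimate: the vectors $\xi_J=(I_H\otimes V^*)\xi$ and $\eta_J=(I_H\otimes V^*)\eta$ need not be unit vectors, while the supremum defining $r$ is taken over unit vectors. This is harmless: the orthogonality conditions are homogeneous, $\|\xi_J\|,\|\eta_J\|\le 1$, so replacing them by $\xi_J/\|\xi_J\|$ and $\eta_J/\|\eta_J\|$ only increases $|\langle T\xi_J,\eta_J\rangle|$, and if one of them vanishes then $\langle \tilde T\xi,\eta\rangle=0$ anyway; but you should say this, since as written the pair $(\xi_J,\eta_J)$ is not literally admissible in the supremum. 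With that remark added, the proof is complete.
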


\begin{proof}
We assume that $k(\cl X\bar \otimes \cl B(\ell^2(J_2)))<\infty.$ Since $|J_1|\leq |J_2|$ we may consider $\ell^2(J_1)$ as a norm closed subspace of $\ell^2(J_2).$ Let $P\colon \ell^2(J_2)\to \ell^2(J_1)$ be the projection onto $\ell^2(J_1).$ We have $$\cl X\bar \otimes \cl B(\ell^2(J_1))=(I_{H}\otimes P)\left(\cl X\bar \otimes \cl B(\ell^2(J_2))\right)(I_{H}\otimes P),$$ where $I_{H}$ is the identity operator of $H.$ By \cite[Lemma 1.3]{kl} we conclude that \[k(\cl X  \bar \otimes \cl B(\ell^2(J_1)))\leq k(\cl X  \bar \otimes \cl B(\ell^2(J_2))).\qedhere\]
\end{proof}

\begin{thm}\label{101} \label{basic} Let $\cl A$ be a von Neumann algebra. The algebra $\cl A$ satisfies (WSP)   
 if and only if the algebras  
$ \cl A^\prime\bar \otimes \cl B( \ell^2(J) ) $ are hyperreflexive for all cardinals $J.$ 
\end{thm}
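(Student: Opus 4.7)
The plan is to derive the theorem by combining Lemmas \ref{14}, \ref{100}, and \ref{102}, which together carry the real technical content; the statement then reduces to some cardinal bookkeeping.

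For the implication ``every $\cl A^\prime\bar\otimes\cl B(\ell^2(I))$ is hyperreflexive $\Rightarrow$ $\cl A$ satisfies (WSP)'', let $u\colon\cl A\to\cl B(H)$ be an arbitrary $\mathrm{w}^*$-continuous unital bounded homomorphism. I would pick an infinite cardinal $I$ with $|I|\geq\dim H$. By hypothesis $k(\cl A^\prime\bar\otimes\cl B(\ell^2(I)))<\infty$, and since $\dim H\leq |I|$, Lemma \ref{14} immediately yields an invertible $S\in\cl B(H)$ such that $S^{-1}u(\cdot)S$ is a $*$-homomorphism. Hence $\cl A$ satisfies (WSP).

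For the converse, realize $\cl A\subseteq\cl B(K)$ and fix an arbitrary cardinal $J$. Choose an infinite cardinal $I$ with $|I|\geq\max\{|J|,\dim K\}$. Since $\cl A$ has (WSP), every $\mathrm{w}^*$-continuous unital bounded homomorphism $u\colon\cl A\to\cl B(H)$ is similar to a $*$-homomorphism, and in particular this holds for every such $u$ with $\dim H\leq |I|$. Because $\dim K\leq |I|$, the hypotheses of Lemma \ref{100} are met and we obtain $k(\cl A^\prime\bar\otimes\cl B(\ell^2(I)))<\infty$. Finally $|J|\leq|I|$, so Lemma \ref{102} gives
\[
k(\cl A^\prime\bar\otimes\cl B(\ell^2(J)))\leq k(\cl A^\prime\bar\otimes\cl B(\ell^2(I)))<\infty,
\]
which is the desired hyperreflexivity. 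Note that the finite cardinal case is covered for free by this argument, since we enlarge $|I|$ to dominate both $|J|$ and $\dim K$.

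The essential substance of the theorem has already been concentrated into the preparatory lemmas, so no new obstacle appears here; the main obstacle was Lemma \ref{100}, which embeds a bounded derivation into the upper-triangular homomorphism $A\mapsto\bigl(\begin{smallmatrix}A\otimes\mathrm{Id}&\delta(A\otimes\mathrm{Id})\\0&A\otimes\mathrm{Id}\end{smallmatrix}\bigr)$, uses Ringrose's automatic $\mathrm{w}^*$-continuity of derivations together with (WSP) to force this homomorphism to be similar to a $*$-homomorphism, and then invokes Haagerup's theorem to deduce complete boundedness of $\delta$, thereby producing the hyperreflexivity constant. Once that, along with the derivation/similarity-degree chain of Lemmas \ref{12}--\ref{14}, is in place, the theorem itself is a straightforward assembly.
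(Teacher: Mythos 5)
Your proof is correct and follows essentially the same route as the paper: both directions are obtained by combining Lemma \ref{14} (for the hyperreflexivity $\Rightarrow$ (WSP) direction) with Lemma \ref{100} (for the converse at a large enough infinite cardinal) and then descending to all smaller cardinals via Lemma \ref{102}. Your write-up merely makes the cardinal bookkeeping more explicit than the paper's terse version.
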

\begin{proof} By combining Lemmas \ref{14} and \ref{100}, we obtain the following:
The algebra $\cl A$ satisfies (WSP)   
 if and only if there exists a cardinal $J_0$ such that  
$ \cl A^\prime\bar \otimes \cl B( \ell^2(J) ) $ are hyperreflexive for all cardinals $J$ with $|J|\ge |J_0|.$ But if $ \cl A^\prime\bar \otimes \cl B( \ell^2(J_0) )$ is hyperreflexive, it follows from Lemma \ref{102} that the algebras $ \cl A^\prime\bar \otimes \cl B( \ell^2(J) )$ are hyperreflexive for all cardinals $J$ such that $|J|\le |J_0|.$
\end{proof}

\begin{rem}\label{ttt} If $\cl A$ satisfies (WSP) then $$\sup_J k(\cl A^\prime\bar \otimes \cl B( \ell^2(J) ))<\infty.$$
Indeed, there exists a sequence of cardinals $(J_n)_{n\in \bb N}$ such that 
$$\lim_n k(\cl A^\prime\bar \otimes \cl B( \ell^2(J_n)))=\sup_J k(\cl A^\prime\bar \otimes \cl B( \ell^2(J) )).$$  
If we define $$J_0=\bigcup_{n}J_n,$$ then 
$$\sup_J k(\cl A^\prime\bar \otimes \cl B( \ell^2(J) ))=k(\cl A^\prime\bar \otimes \cl B( \ell^2(J_0)))< \infty.$$
\end{rem}

\begin{cor}\label{104} If  $\cl A\subseteq \cl B(H)$ is a von Neumann algebra and $H$ is a separable Hilbert space, then 
$\cl A$ satisfies (WSP) if and only if $k( \cl A^\prime\bar \otimes \cl B( \ell^2(\bb N)))<\infty .$ 
\end{cor}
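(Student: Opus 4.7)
The forward direction is immediate: if $\cl A$ satisfies (WSP), then by Theorem \ref{basic} the algebra $\cl A^\prime \bar \otimes \cl B(\ell^2(I))$ is hyperreflexive for every cardinal $I$, and in particular for $I=\bb N$.

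For the converse, I would invoke Lemma \ref{14} with the choice $I=\bb N$. Concretely, assume $k(\cl A^\prime \bar \otimes \cl B(\ell^2(\bb N)))<\infty$, and let $u\colon \cl A\to\cl B(K)$ be any $\mathrm{w}^*$-continuous unital bounded homomorphism, where $K$ is an arbitrary Hilbert space. To apply the second alternative in the hypothesis of Lemma \ref{14}, I need to show that $u(\cl A)$ is $\mathrm{w}^*$-separable.

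The key observation is that $\cl A$ itself is $\mathrm{w}^*$-separable. Indeed, since $H$ is separable, the trace class $S_1(H)$ is norm-separable, and the predual $\cl A_*$ (which is a quotient of $S_1(H)$) is therefore separable as a Banach space. It follows that the unit ball of $\cl A$ is $\mathrm{w}^*$-metrizable and $\mathrm{w}^*$-compact, hence $\mathrm{w}^*$-separable; taking the union of countable dense sets in each ball of radius $n$ shows that $\cl A$ is $\mathrm{w}^*$-separable. Since $u$ is linear and $\mathrm{w}^*$-continuous, the image of a countable $\mathrm{w}^*$-dense subset of $\cl A$ is a countable $\mathrm{w}^*$-dense subset of $u(\cl A)$, so $u(\cl A)$ is $\mathrm{w}^*$-separable as well.

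The hypotheses of Lemma \ref{14} are therefore fulfilled (with $I=\bb N$, via the $\mathrm{w}^*$-separability alternative), and hence $u$ is similar to a $*$-homomorphism of $\cl A$. Since $u$ was arbitrary, $\cl A$ satisfies (WSP). No step is genuinely hard here: the proof is essentially a combination of Theorem \ref{basic} and Lemma \ref{14}, and the only small point that requires justification is the passage from separability of $H$ to $\mathrm{w}^*$-separability of $u(\cl A)$.
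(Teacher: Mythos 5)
Your proposal is correct and follows essentially the same route as the paper: the forward direction via Theorem \ref{basic}, and the converse via the $\mathrm{w}^*$-separability alternative of Lemma \ref{14}, noting that $u(\cl A)$ is $\mathrm{w}^*$-separable because $\cl A$ is $\mathrm{w}^*$-separable and $u$ is $\mathrm{w}^*$-continuous. The only difference is that you spell out (via separability of the predual) why $\cl A$ acting on a separable $H$ is $\mathrm{w}^*$-separable, a point the paper simply takes for granted by choosing a countable $\mathrm{w}^*$-dense set $D\subseteq\cl A$.
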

\begin{proof} If $\cl A$ satisfies (WSP) then by Theorem \ref{101} we have $$k(\cl A^\prime\bar \otimes \cl B( \ell^2(\bb N)))<\infty.$$ 
Conversely, if  $k(\cl A^\prime\bar \otimes \cl B( \ell^2(\bb N )))<\infty $ and    
$u\colon \cl A\rightarrow \cl B(K),$ where $K$ is a Hilbert space, is a w*-continuous, unital and bounded homomorphism, then    
the space $u(\cl A)$ is w*-separable. Indeed, let $D$ be a countable subset of $\cl A$ such that $\cl A=\overline{[D]}^{\mathrm{w}^*}.$ Since $u$ is w*-continuous, we have that $u(\cl A)\subseteq \overline{[u(D)]}^{\mathrm{w}^*}.$ The conclusion follows from Lemma \ref{14}.
\end{proof}

\begin{prop}\label{berc} Let $\cl A$ be a von Neumann algebra and let $v,\,w\in \cl A$ be isometries with orthogonal ranges. Then $\cl A$ satisfies (WSP). 
\end{prop}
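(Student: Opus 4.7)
The plan is to prove the stronger similarity property (SP) directly via Pisier's isometry argument; since (SP) clearly implies (WSP), this suffices. Concretely, I would show that every unital bounded homomorphism $u\colon \cl A\to \cl B(H)$ is completely bounded, and then invoke Haagerup's theorem \cite[Theorem 1.10]{haag} to produce an invertible $S\in\cl B(H)$ such that $S^{-1}u(\cdot)S$ is a $*$-homomorphism. If $u$ is w*-continuous to begin with, so is $S^{-1}u(\cdot)S$, which delivers (WSP).

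The key input is that from the two isometries $v,w$ with orthogonal ranges one manufactures, for every $n\ge 1$, a family of $n$ isometries in $\cl A$ with pairwise orthogonal ranges: setting $v_k:=w^{k-1}v$, the relations $v^*v=w^*w=1$ and $v^*w=0$ give $v_i^* v_j=\delta_{ij}$ at once. Collecting these into a row $V=[v_1\,\cdots\,v_n]\in M_{1,n}(\cl A)$, we have $V^*V=I_n$, and the map $\phi_n\colon M_n(\cl A)\to \cl A$ defined by $\phi_n([a_{ij}])=V[a_{ij}]V^*=\sum_{i,j} v_i a_{ij} v_j^*$ is a complete contraction satisfying the reconstruction identity $[a_{ij}]=V^*\phi_n([a_{ij}])V$. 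Applying $u$ entrywise and using multiplicativity yields
$$u_n([a_{ij}])=\tilde C\,u\bigl(\phi_n([a_{ij}])\bigr)\,\tilde R,$$
where $\tilde R=[u(v_1)\,\cdots\,u(v_n)]$ and $\tilde C$ is the column with entries $u(v_i^*)$. The row norm is immediate from $\sum_j v_j v_j^*\le 1$: $\|\tilde R\|^2=\|\tilde R\tilde R^*\|=\|u(\sum_j v_j v_j^*)\|\le \|u\|$. An analogous estimate for $\tilde C$, combined with $\|\phi_n([a_{ij}])\|\le \|[a_{ij}]\|$ (since $\|V\|=1$), produces $\|u_n\|\le C\|u\|^\alpha$ uniformly in $n$ for some fixed exponent, so $u$ is completely bounded. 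This is essentially Pisier's similarity theorem for operator algebras containing two isometries with orthogonal ranges \cite{pisier1}.

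The main technical point, and the chief obstacle, is the estimate on $\tilde C$: since $u$ is only an algebra homomorphism and not $*$-preserving, $u(v_i^*)\neq u(v_i)^*$ in general, and so the bound on $\|\tilde C\|$ does not drop out of $\sum_i v_i v_i^*\le 1$ in the same easy way that the row bound did. In Pisier's argument this is circumvented by iterating the construction--applying the same estimate to the opposite homomorphism $u^\flat(a):=u(a^*)^*$ acting on the opposite algebra--which is precisely what forces a cubic, rather than quadratic, final dependence $\|u\|_{cb}\le K\|u\|^3$. Once complete boundedness of $u$ is in hand, Haagerup's theorem finishes the proof, and we conclude that $\cl A$ satisfies (SP), hence (WSP).
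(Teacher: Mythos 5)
Your overall strategy --- prove that every bounded unital homomorphism on $\cl A$ is completely bounded and then invoke Haagerup's theorem --- is in principle a legitimate alternative to the paper's argument; it is essentially the route of \cite[Proposition 7.13]{pisier1}, which the paper itself points to in the remark following the proposition. However, your sketch has a genuine gap exactly at the step that carries all the analytic weight. The claimed ``immediate'' row estimate is false: since $u$ is only an algebra homomorphism, $u(v_j)^*\neq u(v_j^*)$ in general, so $\tilde R\tilde R^*=\sum_j u(v_j)u(v_j)^*$ is \emph{not} equal to $u\bigl(\sum_j v_jv_j^*\bigr)$, and no bound on $\|\tilde R\|$ follows from $\sum_j v_jv_j^*\le 1$. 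Writing $u(v_j)=u(w)^{j-1}u(v)$ only yields $\|\tilde R\|^2\le\sum_{j=1}^n\|u\|^{2j}$, which is not uniform in $n$. So the row suffers from exactly the same ``non-$*$-preserving'' obstruction that you correctly identify for the column $\tilde C$. Moreover, the proposed repair via $u^\flat(a)=u(a^*)^*$ is circular: $\|\tilde C\|=\|\mathrm{row}(u^\flat(v_1),\dots,u^\flat(v_n))\|$, so bounding the column for $u$ is literally the same unproved row estimate, now for the homomorphism $u^\flat$. Since trivially $\|\mathrm{row}(u(v_1),\dots,u(v_n))\|\le\|u\|_{cb}$, a uniform-in-$n$ bound on such rows is essentially equivalent to the complete boundedness you are trying to establish; some genuinely different input is required at this point, and it is precisely where the real work in Pisier's treatment lies. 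As written, neither the row nor the column bound is established, so $\sup_n\|u_n\|<\infty$ does not follow.

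By contrast, the paper's proof sidesteps this difficulty by transferring the isometries to a commutant, where they act as honest isometries under a genuine $*$-structure: for any infinite cardinal $J$, the commutant of $\cl A^{\prime}\bar\otimes\cl B(\ell^2(J))$ is $\cl A\bar\otimes\bb C\,Id_{\ell^2(J)}$, which contains the isometries $v\otimes Id_{\ell^2(J)}$ and $w\otimes Id_{\ell^2(J)}$ with orthogonal ranges; Bercovici's theorem \cite[Corollary 4.4]{hari} then gives hyperreflexivity of $\cl A^{\prime}\bar\otimes\cl B(\ell^2(J))$, and Theorem~\ref{101} converts this into (WSP) for $\cl A$. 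If you wish to keep the direct route, you must reproduce the actual argument behind \cite[Proposition 7.13]{pisier1} rather than assert the row and column estimates, since both fail in the form stated.
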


\begin{proof} Let $J$ be an infinite cardinal. The commutant of the space $\cl A^\prime\bar \otimes \cl B( \ell^2(J ))$ is the algebra $\cl A\bar \otimes \bb C I_{\ell^2(J)},$ where $I_{\ell^2(J)}$ is the identity operator of $\ell^2(J).$ This algebra contains the isometries $v\otimes I_{\ell^2(J)}$ and $w\otimes I_{\ell^2(J)}.$ By \cite[Corollary 4.4]{hari} the space $\cl A^\prime\bar \otimes \cl B( \ell^2(J ))$ is hyperreflexive, so Theorem \ref{101} yields that $\cl A$ satisfies (WSP).
\end{proof}

\begin{rem} A different proof of the above corollary is given in \cite[Proposition 7.13]{pisier1}.
\end{rem}

\begin{rem} The von Neumann algebras of type $II_\infty$ or type $III$ are  properly infinite. The properly infinite von Neumann algebras have  isometries with orthogonal ranges, see \cite[Theorem 1.36]{tak}. By Proposition \ref{berc} these algebras satisfy (WSP). Since type I algebras also satisfy (WSP), the similarity problem has a positive solution if
and only if the type $II_1$ von Neumann algebras satisfy (WSP).
\end{rem}

\section{(CHH) implies that all C*-algebras satisfy (SP)}

\begin{lem}\label{SOS} Assume that {\bf (CHH)} holds. If $\cl A$ is  a von Neuamnn algebra with a cyclic vector acting on a separable infinite dimensional Hilbert space $H,$ then $\cl A$ satisfies (WSP).

\end{lem}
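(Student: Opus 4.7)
The plan is to reduce the claim, via Corollary \ref{104}, to establishing that $\cl A'\bar\otimes \cl B(\ell^2(\bb N))$ is hyperreflexive, and then to produce this by using hypothesis \textbf{(CHH)} on $\cl A'$. Since $H$ is separable, $\cl A'$ is a separably acting von Neumann algebra, so \textbf{(CHH)} will apply as soon as $\cl A'$ itself is known to be hyperreflexive (not yet completely). Thus the entire task boils down to verifying the hyperreflexivity of $\cl A'$.

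To obtain hyperreflexivity of $\cl A'$, I would appeal to the classical theorem of Christensen which says that every bounded derivation $\delta\colon\cl A\to \cl B(H)$ is inner provided that $\cl A\subseteq \cl B(H)$ admits a cyclic vector; since by assumption $\cl A$ has such a vector in the separable Hilbert space $H$, this theorem applies directly and shows that every bounded derivation of $\cl A$ into $\cl B(H)$ is of the form $\delta_{\mathrm{id},T}$ for some $T\in\cl B(H)$. Combined with the equivalence of Christensen recalled in the introduction (every derivation $\cl A\to \cl B(H)$ is inner if and only if $\cl A'$ is hyperreflexive), this at once yields that $\cl A'$ is hyperreflexive.

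With $\cl A'$ hyperreflexive and separably acting, hypothesis \textbf{(CHH)} produces the complete hyperreflexivity of $\cl A'$, that is $k(\cl A'\bar\otimes \cl B(\ell^2(\bb N)))<\infty$. Corollary \ref{104} then finishes the proof: $\cl A$ satisfies (WSP).

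The only non-routine input I am citing is Christensen's theorem on the innerness of derivations of von Neumann algebras possessing a cyclic vector; everything else is just an assembly of results already developed in the paper. The hardest part to get right in a careful write-up will be the explicit invocation of this theorem — and ensuring that the separability of $H$ is respected throughout so that \textbf{(CHH)} may legally be used on $\cl A'$.
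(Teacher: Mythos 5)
Your proposal is correct and follows essentially the same route as the paper: both reduce the statement to the hyperreflexivity of $\cl A'$, then invoke \textbf{(CHH)} (legitimately, since $\cl A'$ acts on the separable space $H$) and conclude via Corollary \ref{104}. The only difference is cosmetic: the paper gets hyperreflexivity of $\cl A'$ by noting that the cyclic vector for $\cl A$ is separating for $\cl A'$ and citing Kraus--Larson, whereas you obtain the same fact from Christensen's theorem on derivations of algebras with a cyclic vector combined with the derivation--hyperreflexivity equivalence recalled in the introduction.
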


\begin{proof}
    The cyclic vector for the algebra $\cl A$ is a seperating vector for the commutant $\cl A^\prime.$  By \cite[Remark 3.5]{kl} it follows that $\cl A^\prime$ is hyperreflexive. Under our hypothesis, the algebra 
     $\cl A^\prime \bar \otimes \cl B(\ell^2(\bb N))$ is also hyperreflexive and according to Corollary \ref{104} the algebra $\cl A$ satisfies (WSP).
\end{proof}

\begin{thm}\label{SOL} Assume that {\bf (CHH)} holds. If $\,\cl A$ is a separably acting von Neumann algebra, then $\cl A$ satisfies (WSP).\end{thm}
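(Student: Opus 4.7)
The plan is to reduce the theorem to Lemma \ref{SOS} through a GNS-style construction, and then transfer the resulting bound on the hyperreflexivity constant back to $\cl A$ using Lemma \ref{11}. The key observation is that, although a general separably acting von Neumann algebra need not admit a cyclic vector, it is always normally $*$-isomorphic to one that does, acting on a separable Hilbert space.

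First I would use Corollary \ref{104} to restate the goal: it suffices to show $k(\cl A^\prime \bar\otimes \cl B(\ell^2(\bb N))) < \infty$. Since $\cl A$ has separable predual and is therefore countably decomposable, it admits a faithful normal state $\omega$. Form the GNS triple $(\pi_\omega, H_\omega, \xi_\omega)$: faithfulness plus normality of $\omega$ make $\pi_\omega \colon \cl A \to \pi_\omega(\cl A)$ a normal $*$-isomorphism onto the von Neumann algebra $\pi_\omega(\cl A) \subseteq \cl B(H_\omega)$, the vector $\xi_\omega$ is cyclic for $\pi_\omega(\cl A)$, and $H_\omega$ is separable (any normal state on a separably acting algebra is vector-implemented on a countable amplification of the original representation).

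Next I would apply Lemma \ref{SOS} to $\pi_\omega(\cl A)$ (this is the step that invokes hypothesis \textbf{(CHH)}) to conclude that $\pi_\omega(\cl A)$ satisfies (WSP); Corollary \ref{104} then yields $k(\pi_\omega(\cl A)^\prime \bar\otimes \cl B(\ell^2(\bb N))) < \infty$. To transfer this back to $\cl A$, I would apply Lemma \ref{11} to the inverse $*$-isomorphism $\pi_\omega^{-1} \colon \pi_\omega(\cl A) \to \cl A$ with $I = \bb N$ (an infinite cardinal with $\dim H \le |I|$, since $H$ is separable), obtaining
\[ k(\cl A^\prime \bar\otimes \cl B(\ell^2(\bb N))) \le k(\pi_\omega(\cl A)^\prime \bar\otimes \cl B(\ell^2(\bb N))) < \infty, \]
and a final appeal to Corollary \ref{104} concludes.

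The only real bookkeeping hurdle is the degenerate case in which $H_\omega$ is finite dimensional, since Lemma \ref{SOS} is stated for a separable infinite dimensional ambient Hilbert space. In that case $\cl A$ itself is finite dimensional, hence a finite direct sum of matrix algebras, and satisfies (WSP) classically. Apart from this dichotomy, no substantial new ingredient is needed: the proof is essentially the composition of Lemma \ref{SOS}, Corollary \ref{104}, and Lemma \ref{11}, with the GNS construction playing the role of a bridge between the ``cyclic'' case and the general separably acting case.
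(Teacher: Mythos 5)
Your proof is correct, but it takes a genuinely different route from the paper's. The paper never performs a GNS construction: it writes $\cl A$ and $\cl A^\prime$ as direct integrals of factors over a standard measure space, proves a technical Claim producing a vector $\xi$ whose fibre components are $\mu$-a.e.\ nonzero, lets $P$ be the projection onto $\overline{\cl A\xi}$, and uses factoriality of the fibres to see that $X\mapsto XP$ is a $*$-isomorphism of $\cl A$ onto $\cl A P$; since $\xi$ is separating for $P\cl A^\prime P$, \cite[Remark 3.5]{kr} gives $k(P\cl A^\prime P)\le 48$, {\bf (CHH)} upgrades this to complete hyperreflexivity, and the passage back to $\cl A^\prime$ uses the stability result $k(M_\infty(\cl A^\prime))=k(M_\infty(P\cl A^\prime P))$ of \cite[Theorem 4.6]{ele2} before invoking Corollary \ref{104}. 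You reach the same structural reduction (``$\cl A$ is normally $*$-isomorphic to a separably acting algebra with a cyclic vector'') more softly: separably acting implies $\sigma$-finite, hence a faithful normal state exists, its GNS representation is faithful, normal, cyclic and lives on a separable space, so Lemma \ref{SOS} applies, and the transfer back is done with Lemma \ref{11} applied to $\pi_\omega^{-1}$ with $I=\bb N$ (the finite-dimensional degenerate case is correctly split off, such algebras being type I). What each approach buys: yours avoids direct integral theory and the external stability theorem of \cite{ele2}, reusing instead machinery already in the paper (Lemmas \ref{11} and \ref{SOS}); the paper's argument stays inside the originally given representation, at the cost of the measure-theoretic construction of $\xi$. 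One small economy available to you: the proof of Lemma \ref{SOS} already establishes $k\bigl(\pi_\omega(\cl A)^\prime\bar\otimes\cl B(\ell^2(\bb N))\bigr)<\infty$, so the round trip through (WSP) and the forward direction of Corollary \ref{104} can be skipped.
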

\begin{proof} 
 We consider the direct integral decompositions
 $$\cl A^\prime = \int_\Gamma^{\oplus}\cl A_\gamma^\prime\, d\mu(\gamma),\,\,\,\cl A=\int_\Gamma^{\oplus}\cl A_\gamma\, d\mu(\gamma),$$
 where $(\Gamma,\mu)$ is a standard measure space and $\,\cl A_\gamma^\prime,\, \cl A_\gamma $ are factors for $\mu$-a.a.$\,\gamma\in\Gamma.$ Let $$H=\int_\Gamma^\oplus H_\gamma\, d\mu (\gamma)$$ be the Hilbert space on which $\cl A$ acts.
 
 \textbf {Claim:} There exists a non-zero vector $\xi \in H$  such that, if $$K=\overline{\cl A\,\xi}=\int_{\Gamma}^{\oplus}\overline{\cl A_\gamma\,\xi(\gamma)}\,d \mu(\gamma),\,
\,\,K_\gamma=\overline{\cl A_\gamma\,\xi(\gamma)}$$ and  $E=\{\gamma \in \Gamma\mid  K_\gamma=\left\{0\right\}\},$
then $\mu(E)=0.$

{\bf Proof of the Claim:} 
We assert that if $F$ is a Borel subset of $\Gamma$ such that $0<\mu(F)<\infty,$ then there exists a vector $\eta_0\in H$ such that 
$$\{\gamma\in \Gamma\mid \eta_0(\gamma)\neq 0\}=F\,\,\,\text{$\mu$-a.e.}$$  
Indeed, by Zorn's Lemma we choose a maximal family of non-zero vectors of $H$ with disjoint supports, $G=(\eta_k)_{k\in\bb N}$ such that 
$$F_0=\{\gamma\in \Gamma\mid \exists\,k\in\bb N,\,\eta_k(\gamma)\neq 0\}\subseteq F.$$
If $\,\mu(F\setminus F_0)>0,$ then there exists $\eta \in H$ such that 
$\eta=\eta\,\chi_{F\setminus F_0}\in H\setminus \{0\}.$ However, this fact contradicts the maximality of  $G,$ and thus $\mu(F\setminus F_0)=0.$
We set $$\eta_0=\sum_{k=1}^{\infty}\frac{1}{k\|\eta_k\|_2}\,\eta_k\in H$$ and we have $\left\{\gamma\in\Gamma\mid \eta_0(\gamma)\neq 0\right\}=F\,\,\,\text{$\mu$-a.e.,}$ as desired. 

We now choose an arbitrary $\xi \in H\setminus \left\{0\right\}$ and we define $K$ and $E$ as in the claim. If  $\mu(E)=0,$ we are ok. If $\mu(E)>0,$ then there exists an increasing  sequence of sets $(L_k)_{k\in\bb N}$ 
such that $$0<\mu(L_k)<\infty\,\,\,\text{and}\,\,\,\mu\left(E\setminus \bigcup_{k=1}^{\infty}L_k\right)=0$$ and there exist vectors $(\eta_k)_{k\in\bb N}\subseteq H$ 
such that $\|\eta_k\|\le 1$ and
$$L_k=\{\gamma \in \Gamma\mid \eta_k(\gamma)\neq 0\}\subseteq E,\,\,\forall\,k\in\bb N.$$
For each $k\in\bb N$ we consider the sets $$M_k=L_k\setminus \bigcup_{i=1}^{k-1}L_i$$ and the vectors $$\zeta_k=\frac{1}{2^k}\,\chi_{M_k}\,\eta_k,\,\,\,\zeta=\sum_{k=1}^\infty \zeta_k.$$ We define the space $$\cl M=\overline{\cl A(\xi+\zeta)}$$ and we are going to prove that $\mu(\left\{\gamma\in\Gamma\mid \cl M_\gamma=\left\{0\right\}\right\})=0.$ To this end, let $\gamma\in\Gamma.$
It holds that $$\overline{\cl A_\gamma\,\xi(\gamma)}\perp \overline{\cl A_{\gamma}\,\zeta(\gamma)}.$$
Indeed, if $X,\,Y\in\cl A_{\gamma},$ then \begin{align*}
    \sca{X(\xi(\gamma)), Y(\zeta(\gamma))}&=\sca{X(\xi(\gamma)), Y(\zeta(\gamma)\,\chi_{E}(\gamma))}\\&=\sca{X(\xi(\gamma)\,\chi_{E}(\gamma)), Y(\zeta(\gamma))}\\&=
\sca{0, Y(\zeta(\gamma))}\\&=0
\end{align*}

It follows that $\overline{\cl A\,\xi}\perp \overline{\cl A\,\zeta}.$ Therefore,
\begin{align*}
   \mu(\{\gamma \in \Gamma\mid  \cl M_\gamma=\left\{0\right\}\})&=\mu(\{\gamma \in \Gamma \mid K_{\gamma}=\left\{0\right\}\,\,\,\text{and}\,\,\, \zeta(\gamma)=0\})\\&=
\mu\left(E\setminus \bigcup_{k=1}^{\infty}M_k\right)=0, 
\end{align*} 
as desired.

By the above claim, we may fix a non-zero vector $\xi \in H$  
 such that if $$K=\overline{\cl A\,\xi}=\int_{\Gamma}^{\oplus}\overline{\cl A_\gamma\,\xi(\gamma)}\,d\mu(\gamma),$$ then $$\mu(\{\gamma \in \Gamma\mid K_\gamma=\left\{0\right\}\})=0.$$
 We denote by $P$ the projection onto $K$ and clearly $P\in\cl A^\prime.$ 
 We consider the $*$-homomorphism 
 $$\cl A\to \cl A P,\,\, X\mapsto X P.$$
 This map induces $*$-homomorphisms $$\cl A_\gamma\to \cl A_\gamma P_\gamma,\,\,X\mapsto X P_\gamma.$$
Since $\cl A_\gamma$ are factors and $P_\gamma \neq 0$ for $\mu$-a.a.$\,\gamma\in\Gamma,$ the above maps are $*$-isomomorphisms for $\mu$-a.a.$\,\gamma\in\Gamma,$ so
$$\cl A_\gamma \cong \cl A_\gamma P_\gamma $$  for $\mu$-a.a.$\,\gamma\in\Gamma.$ 
Therefore $$\cl A\cong \int_{\Gamma}^{\oplus}\cl A_\gamma P_\gamma\,d\mu(\gamma)=\cl A P.$$
  According to \cite[Theorem 4.6]{ele2} we have $k(M_{\infty}(\cl A^\prime))=k(M_{\infty}(P \cl A^\prime P)).$ 
  
  However, the vector $\xi$ is a separating vector for the algebra $P \cl A^\prime P$ and by \cite[Remark 3.5]{kl} it holds that $k(P \cl A^\prime P)\leq 48<\infty.$ Under our hypothesis it follows that $k(M_{\infty}(P \cl A^\prime P))<\infty,$ and thus $$k(M_{\infty}(\cl A^\prime))=k(M_{\infty}(P \cl A^\prime P))<\infty.$$ By Corollary \ref{104} the algebra $\cl A$ satisfies (WSP).
\end{proof}

\begin{lem}\label{an} Let $\cl X$ be a dual Banach space, let $\cl X_0,\,\cl Y,\,\cl Z$ be Banach spaces such that 
$\cl X=\overline{\cl X_0}^{\mathrm{w}^*}$ and let $\phi\colon \cl X\to \cl Y^{**}$ be a $\mathrm{w}^*$-continuous onto isometry such that $\phi(\cl X_0)=\cl Y.$ If $u\colon \cl X_0\to \cl Z^*$ is a bounded linear map, then there exists a $\mathrm{w}^*$-continuous bounded linear map $\tilde u\colon \cl X\to \cl Z^*$ such that $\tilde{u}|_{\cl X_0}=u$ and $\|u\|=\|\tilde u\|.$
\end{lem}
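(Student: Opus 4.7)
The plan is to transfer the extension problem via $\phi$ to the standard problem of extending a bounded map $\cl Y\to \cl Z^*$ to a $\mathrm{w}^*$-continuous map $\cl Y^{**}\to \cl Z^*$, which can be solved using the second adjoint together with the canonical projection $\cl Z^{***}\to \cl Z^*$. Since $\phi$ is an onto isometry with $\phi(\cl X_0)=\cl Y$, the restriction $\phi|_{\cl X_0}\colon \cl X_0\to \cl Y$ is a bijective linear isometry. Setting $v:=u\circ(\phi|_{\cl X_0})^{-1}\colon \cl Y\to \cl Z^*$, one has $\|v\|=\|u\|$, and the lemma reduces to constructing a $\mathrm{w}^*$-continuous norm-preserving extension $\tilde v\colon \cl Y^{**}\to \cl Z^*$ of $v$, after which $\tilde u:=\tilde v\circ\phi$ will do the job.

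The core step is the construction of $\tilde v$. I would consider the second adjoint $v^{**}\colon \cl Y^{**}\to \cl Z^{***}$ together with the adjoint $\iota_{\cl Z}^{\,*}\colon \cl Z^{***}\to \cl Z^*$ of the canonical embedding $\iota_{\cl Z}\colon \cl Z\to \cl Z^{**}$. A routine computation gives $\iota_{\cl Z}^{\,*}\circ\iota_{\cl Z^*}=\id_{\cl Z^*}$, so $\iota_{\cl Z}^{\,*}$ is a contractive projection onto (the canonical copy of) $\cl Z^*$ inside $\cl Z^{***}$. Setting $\tilde v:=\iota_{\cl Z}^{\,*}\circ v^{**}$, one checks that (i) $\tilde v$ is $\mathrm{w}^*$-continuous, since $v^{**}$ is $\mathrm{w}^*$-to-$\mathrm{w}^*$ continuous as a second adjoint and $\iota_{\cl Z}^{\,*}$ is $\mathrm{w}^*$-continuous as the adjoint of a bounded linear map; (ii) $\tilde v$ extends $v$, via the naturality identity $v^{**}\circ\iota_{\cl Y}=\iota_{\cl Z^*}\circ v$; and (iii) $\|\tilde v\|\le \|v\|$, with the reverse inequality forced by the fact that $\tilde v$ extends $v$.

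Pulling back, I would define $\tilde u:=\tilde v\circ\phi\colon \cl X\to \cl Z^*$. This is $\mathrm{w}^*$-continuous as a composition of $\mathrm{w}^*$-continuous maps, it satisfies $\tilde u|_{\cl X_0}=u$ because $\phi$ maps $\cl X_0$ onto $\cl Y$ and $\tilde v|_{\cl Y}=v$, and $\|\tilde u\|\le \|\tilde v\|\cdot\|\phi\|=\|u\|$, with equality following from $\tilde u|_{\cl X_0}=u$.

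I do not foresee a serious technical obstacle; the only point requiring care is the bookkeeping with canonical embeddings and their adjoints when verifying the extension and $\mathrm{w}^*$-continuity of $\tilde v$. The hypothesis $\cl X=\overline{\cl X_0}^{\mathrm{w}^*}$ plays no role in the existence of $\tilde u$, but it guarantees that the $\mathrm{w}^*$-continuous extension obtained is unique.
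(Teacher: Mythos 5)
Your argument is correct and follows essentially the same route as the paper: transfer $u$ to $v=u\circ(\phi|_{\cl X_0})^{-1}\colon\cl Y\to\cl Z^*$, extend to a $\mathrm{w}^*$-continuous map on $\cl Y^{**}$ of the same norm, and pull back by $\phi$. The only difference is that the paper cites this extension step (Blecher--Le Merdy, A.2.2) while you prove it directly via $\iota_{\cl Z}^{\,*}\circ v^{**}$, which is exactly the standard proof of that cited fact; your closing remark that $\cl X=\overline{\cl X_0}^{\mathrm{w}^*}$ is only needed for uniqueness is also accurate.
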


\begin{proof} We consider the map $$u\circ \phi^{-1}\colon \cl Y\to \cl Z^*.$$
By \cite[A.2.2]{bm}, this map extends to a $\mathrm{w}^*$-continuous map 
$v\colon \cl Y^{**}\to \cl Z^*$ with $\|v\|=\|u\circ \phi^{-1}\|=\|u\|.$ We set 
$\tilde u=v\circ \phi\colon \cl X\to \cl Z^*$ and it holds that $\|u\|=\|\tilde u\|.$ Furthermore, for all $X\in \cl X_0$ we have 
\[\tilde u(X)=v\circ \phi(X)=u\circ \phi^{-1}(\phi(X))=u(X).\qedhere\]

\end{proof}

\begin{lem}\label{RR} Let $\cl A$ be a $C^*$-algebra such that $\cl A\subseteq \cl A^{**}\subseteq \cl B(H)$ and let $P\in \cl A^\prime$ be a projection. Then there exists a $C^*$-algebra $\cl D$ 
and a $*$-isomorphism  $\alpha\colon \cl A P\to \cl D$ which extends to a $*$-isomorphism from 
$\overline{\cl A P}^{\mathrm{w}^*}$ to $\cl D^{**}.$ 
\end{lem}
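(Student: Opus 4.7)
The natural choice is $\cl D := \cl A/J$, where $J := \{A \in \cl A : AP = 0\}$ is a norm-closed two-sided ideal of $\cl A$. Since $P \in \cl A^\prime$, the map $A \mapsto AP$ is a $*$-homomorphism $\cl A \to \cl B(H)$ with kernel $J$ and image $\cl A P$, inducing a $*$-isomorphism $\cl A/J \to \cl A P$; I take $\alpha : \cl A P \to \cl D$ to be its inverse, so that $\alpha(AP) = A + J$.

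To construct the extension $\tilde\alpha : \overline{\cl A P}^{\mathrm{w}^*} \to \cl D^{**}$, I would use two canonical identifications. First, $\overline{\cl A P}^{\mathrm{w}^*} = \cl A^{**} P$ inside $\cl B(H)$: this follows from the hypothesis that $\cl A$ is $\mathrm{w}^*$-dense in $\cl A^{**}$ together with the $\mathrm{w}^*$-continuity of right multiplication by the fixed bounded operator $P$. Second, the standard bidual-of-quotient result for $C^*$-algebras yields a canonical $*$-isomorphism $\cl D^{**} = (\cl A/J)^{**} \cong \cl A^{**}/\overline{J}^{\mathrm{w}^*}$. With these identifications, I would define $\tilde\alpha : \cl A^{**} P \to \cl A^{**}/\overline{J}^{\mathrm{w}^*}$ by $\tilde\alpha(XP) := X + \overline{J}^{\mathrm{w}^*}$; this formula plainly restricts to $\alpha$ on $\cl A P$ and is a $*$-homomorphism once it is shown to be well-defined.

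The crux of the argument is to prove that $\tilde\alpha$ is a well-defined bijection. Both the well-definedness (independence of the representative $X$ in $XP$) and the injectivity reduce to establishing the equality of two $\mathrm{w}^*$-closed two-sided ideals of $\cl A^{**}$:
\[
K := \{X \in \cl A^{**} : XP = 0\} \;=\; \overline{J}^{\mathrm{w}^*}.
\]
The inclusion $\overline{J}^{\mathrm{w}^*} \subseteq K$ is immediate from $J \subseteq K$ and the $\mathrm{w}^*$-closedness of $K$. I expect the reverse inclusion to be the main obstacle. To tackle it, I would use that both ideals are $\mathrm{w}^*$-closed two-sided ideals of the von Neumann algebra $\cl A^{**}$ and hence each is determined by a central projection of $\cl A^{**}$, and apply a Kaplansky-density type approximation (which crucially exploits the hypothesis that $\cl A^{**}$ is the $\mathrm{w}^*$-closure of $\cl A$ inside $\cl B(H)$) to show the two central projections coincide. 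Once $K = \overline{J}^{\mathrm{w}^*}$ is in hand, surjectivity of $\tilde\alpha$ is automatic from surjectivity of the quotient map $\cl A^{**} \to \cl A^{**}/\overline{J}^{\mathrm{w}^*}$, completing the construction.
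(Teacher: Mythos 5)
Your reduction is essentially the same as the paper's: the paper takes $\cl D=\cl A Q$, where $Q$ is the central projection of $\cl A^{**}$ with $\{X\in\cl A^{**}\mid XP=0\}=\cl A^{**}Q^{\perp}$, and $\cl A Q\cong\cl A/J$; its key step $\ker\tau=\cl A^{**}Q^{\perp}$ (equivalently $(\cl A Q)^{**}\cong\cl A^{**}Q$) is literally your identity $\overline{J}^{\mathrm{w}^*}=K$, where $K=\{X\in\cl A^{**}\mid XP=0\}$. Your preliminary identifications are fine: $\overline{\cl A P}^{\mathrm{w}^*}=\cl A^{**}P$ (since $P$ commutes with $\cl A^{**}=\overline{\cl A}^{\mathrm{w}^*}$ and $X\mapsto XP$ is normal), $(\cl A/J)^{**}\cong\cl A^{**}/\overline{J}^{\mathrm{w}^*}$, and $\overline{J}^{\mathrm{w}^*}\subseteq K$. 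But the reverse inclusion $K\subseteq\overline{J}^{\mathrm{w}^*}$, which you correctly isolate as the crux and which is needed already for $\tilde\alpha$ to be well defined, is not proved: you only express the expectation that a Kaplansky-density comparison of the two central projections will yield it. As written, the proposal is a reformulation of the lemma rather than a proof of it.

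That expectation cannot be fulfilled from the stated hypotheses, because the inclusion $K\subseteq\overline{J}^{\mathrm{w}^*}$ can fail. Take $\cl A=C[0,1]$ in its universal representation, let $z\in\cl A^{**}$ be the support projection of the character $f\mapsto f(1/2)$ (so $\cl A^{**}z=\bb C z$ and $z$ is central), and put $P=1-z\in\cl A^{\prime}$. Then $J=\{f\in\cl A\mid f(1-z)=0\}=\{0\}$, because $f(1-z)=0$ forces $f=f(1/2)\,z$ and $z\notin C[0,1]$; yet $K=\{X\in\cl A^{**}\mid X(1-z)=0\}=\bb C z\neq\{0\}$. Hence $\overline{J}^{\mathrm{w}^*}\subsetneq K$ and your $\tilde\alpha$ is not well defined. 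In fact, in this example no choice of $\cl D$ and $\alpha$ satisfies the conclusion: a $*$-isomorphism $\overline{\cl A P}^{\mathrm{w}^*}\to\cl D^{**}$ extending $\alpha$ would be automatically normal, its inverse would be the unique normal extension of $\alpha^{-1}$, and this would exhibit the character $f\mapsto f(1/2)$ as a normal state of $\cl A^{**}(1-z)$, i.e.\ as integration against a measure $\mu$ with $\mu(\{1/2\})=0$, which is impossible. So closing your gap genuinely requires additional information tying $P$ to $\cl A$ (for instance, that every functional on $\cl A$ annihilating $J$ is supported on the central support of $P$ in $\cl A^{**}$), not a density argument alone; note that the corresponding inclusion $\cl A^{**}Q^{\perp}\subseteq\ker\tau$ in the paper's own proof (where $\tau(Y_kQ)=\iota(Y_kQ)$ is invoked for $Y_k\in\cl A$, implicitly requiring $\tau(Q)$ to be the unit) rests on exactly the same point.
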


\begin{proof} Since $P\in\cl A^\prime,$ the map $$\cl A\to \cl A P\colon  X\mapsto X P$$ is a bounded $*$-homomorphism, and thus it can be extended to a $\mathrm{w}^*$-continuous map from $\cl A^{**}$ onto $\overline{\cl A P}^{\mathrm{w}^*}.$ Therefore, there exists a central projection $Q\in \cl A^{**}$ such that $\cl A^{**}Q\cong \overline{\cl A P}^{\mathrm{w}^*}.$ The last isomorphism induces a $*$-isomorphism $$\lambda\colon \cl A P\to \cl A Q, \,\,\,X P\mapsto X Q$$ which extends to a 
$\mathrm{w}^*$-continuous isomorphism from $\overline{\cl A P}^{\mathrm{w}^*}$ to $\cl A^{**}Q.$
We consider the canonical embedding $$\iota\colon \cl A Q\to (\cl A Q)^{**}$$
as well as the map 
$$\tau\colon \cl A^{**}\to (\cl A Q)^{**}$$ such that $\tau(X)=\iota(X Q),\,\,\forall\,X\in \cl A.$
Moreover, we define the  $\mathrm{w}^*$-continuous onto map $$\sigma\colon (\cl A Q)^{**}\to \overline{\cl A Q}^{\mathrm{w}^*}, \,\,\sigma(\iota(X Q))=X Q,\,\,\forall\,X\in \cl A.$$
 Let $X\in  \cl A^{**}$ such that $\tau(X)=0.$ There exists a net 
 $(X_j)_{j\in J}$ of elements in $\cl A$ such that $X_j\stackrel{\mathrm{w}^*}{\to} X,$ and thus
 $$\tau(X_j)\stackrel{\mathrm{w}^*}{\to} \tau(X)\Rightarrow \iota(X_j Q)\stackrel{\mathrm{w}^*}{\to} 0,$$ which implies that 
 $$ \sigma(\iota(X_j Q))\stackrel{\mathrm{w}^*}{\to} 0\Rightarrow X_j Q\stackrel{\mathrm{w}^*}{\to} 0\Rightarrow X Q=0\Rightarrow X=X\,Q^\bot.$$
 Thus, $\ker \tau \subseteq \cl A^{**}Q^\bot.$

 On the other hand, if 
 $X=X Q^\bot\in \cl A^{**}\,Q^\bot,$  then there exists a net   
$(X_j)_{j\in J}$ of elements in $\cl A$ such that  $X_j\stackrel{\mathrm{w}^*}{\to} X.$ We have 
$$\tau(X_j)\stackrel{\mathrm{w}^*}{\to} \tau(X)\Rightarrow \iota(X_j Q)\stackrel{\mathrm{w^*}}{\to} \tau(X)\Rightarrow \sigma(\iota(X_j Q))\stackrel{\mathrm{w}^*}{\to} \sigma(\tau(X)).$$
Therefore, $$X_j Q\stackrel{\mathrm{w}^*}{\to} \sigma(\tau(X))\Rightarrow \sigma(\tau(X))=0,$$ that is,
$$\tau(X)\in \ker \sigma \subseteq (\cl A Q)^{**},$$
so there exists a net $(Y_k)_{k\in K}$ of elements in $\cl A$ 
such that 
$$\tau(X)=\lim_{k}\iota(Y_k Q)\Rightarrow 0=\sigma(\tau(X))=\lim_{k} \sigma(\iota(Y_k Q))=\lim_{k}Y_k Q,$$ where the above limits are on the $\mathrm{w}^*$-topology.
However, $$0=\lim_{k}\tau(Y_k\,Q)=\lim_{k}\iota(Y_k\,Q)$$ on the $\mathrm{w}^*$-topology, and thus
$$\tau(X)=0\Rightarrow X\in \ker \tau.$$
We conclude that $$\ker \tau =\cl A^{**}Q^\bot,$$ so 
$$(\cl AQ)^{**}\cong \cl A^{**}/\ker \tau \cong \cl A^{**}Q.$$
We proved that there exists a $*$-isomorphism 
$$\iota: \cl A Q\to \iota(\cl A Q)$$ 
which extends to a $*$-isomorphism $$\tau: \cl A^{**}Q\to (\cl A Q)^{**},$$
 satisfying $$\tau(X Q)=\iota(X Q),\,\,\forall\,X\in\cl A.$$
Thus, there exists a  $*$-isomorphism 
$$\alpha=\tau\circ \lambda\colon \cl A P\to \iota(\cl A Q)$$
 which extends to a $*$-isomorphism from $\overline{\cl A P}^{\mathrm{w}^*}$ to $(\cl A Q)^{**}.$ 
  We set $\cl D=\cl A Q$ and the assertion follows.
  \end{proof}

In the following Lemma \ref{RRR} and Theorem \ref{sp} we assume that {\bf (CHH)} is true and we show that all $C^*$-algebras satisfy (SP). 

\begin{lem}\label{RRR} Assume that {\bf (CHH)} holds. Let $\cl A$ be a separable $C^*$-algebra such that  $\cl A\subseteq \cl A^{**}\subseteq  \cl B(H)$ and $I_H\in \cl A.$ If $\,u\colon \cl A\to \cl B(K)$ is a unital bounded homomorphism, then $u$ is completely bounded.
 \end{lem}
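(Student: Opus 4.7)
The plan is to extend $u$ to a $\mathrm{w}^*$-continuous homomorphism on the bidual $\cl A^{**}$ and then mimic the proof of Lemma~\ref{14}, with Theorem~\ref{SOL} supplying the hyperreflexivity input that was previously taken as a hypothesis.

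First I would extend $u$ to a $\mathrm{w}^*$-continuous unital bounded homomorphism $\tilde u\colon \cl A^{**}\to \cl B(K)$ by the universal property of the second dual (essentially Lemma~\ref{an}), with $\|\tilde u\|=\|u\|$. A matrix-level application of Kaplansky's density theorem to $M_n(\cl A)\subseteq M_n(\cl A^{**})$ yields $\|u_n\|=\|\tilde u_n\|$ for every $n\in\bb N$, hence $\|u\|_{cb}=\|\tilde u\|_{cb}$, so it suffices to show that $\tilde u$ is completely bounded.

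For each finite subset $J\subseteq K$ set $H_J=\overline{[u(\cl A)\xi_j\mid j\in J]}^{\|\cdot\|}$ and let $\tilde u_J\colon \cl A^{**}\to \cl B(H_J)$ be the compression of $\tilde u$. Because $\cl A$ is separable, each $H_J$ is a separable Hilbert space. The map $\tilde u_J$ is $\mathrm{w}^*$-continuous, unital and bounded with a finite cyclic set, so Haagerup's Proposition~1.9 guarantees that $\tilde u_J$ is completely bounded, and his Theorem~1.10 produces an invertible $S_J\in \cl B(H_J)$ such that $\pi_J(\cdot):=S_J^{-1}\tilde u_J(\cdot)S_J$ is a normal $*$-representation of $\cl A^{**}$. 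The image $\pi_J(\cl A^{**})$ is a separably acting von Neumann algebra, so by Theorem~\ref{SOL} (invoking \textbf{(CHH)}) it satisfies (WSP), and Corollary~\ref{104} then gives $k(\pi_J(\cl A^{**})^\prime\bar\otimes \cl B(\ell^2(\bb N)))<\infty$.

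The argument of Lemma~\ref{13} applied to the similarity transform $y\mapsto S_J^{-1}yS_J$ on the separably acting von Neumann algebra $\pi_J(\cl A^{**})$ yields a nondecreasing function $F_J$ with $\|\tilde u_J\|_{cb}\leq F_J(\|\tilde u_J\|)\leq F_J(\|u\|)$. The main obstacle is promoting this to a bound uniform in $J$, which is what is required to conclude
$$\|u\|_{cb}=\sup_{n\in\bb N}\|u_n\|\leq \sup_{J\subseteq K,\,|J|<\infty}\|\tilde u_J\|_{cb}<\infty.$$
Uniformity should follow by combining Lemma~\ref{11}, which dominates each $k(\pi_J(\cl A^{**})^\prime\bar\otimes \cl B(\ell^2(\bb N)))$ by the hyperreflexivity constant of a common enveloping algebra, with Lemma~\ref{RR}, which identifies each $\pi_J(\cl A^{**})$ with the bidual $\cl D_J^{**}$ of a separable $C^*$-algebra coming from a central projection of $\cl A^{**}$; together these facts allow \textbf{(CHH)} to supply a single function $F$ that works for every $J$, completing the proof.
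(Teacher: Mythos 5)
Your first half is sound: extending $u$ to a $\mathrm{w}^*$-continuous homomorphism $\tilde u$ on $\cl A^{**}$, compressing to the separable invariant subspaces $H_J$, using Haagerup to produce a similarity with a normal representation $\pi_J$ of $\cl A^{**}$ on $H_J$, and then applying Theorem \ref{SOL} and Corollary \ref{104} to the separably acting algebra $\pi_J(\cl A^{**})$ does give, for each finite $J$, a bound $\|\tilde u_J\|_{cb}\le f_J(\|u\|)$. The genuine gap is exactly the step you flag, and the fix you propose does not close it. Lemma \ref{11} bounds $k(\pi_J(\cl A^{**})^\prime\bar\otimes\cl B(\ell^2(\bb N)))$ only by the constant of an algebra admitting a normal surjection onto $\pi_J(\cl A^{**})$; the only common source for all the $\pi_J$ is $\cl A^{**}$ in its universal representation, and the finiteness of $k((\cl A^{**})^\prime\bar\otimes\cl B(\ell^2(I)))$ is precisely what is \emph{not} available: $\cl A^{**}$ is not separably acting, so Theorem \ref{SOL} does not apply to it, and by Lemma \ref{100} and Theorem \ref{101} such finiteness amounts to (WSP) for $\cl A^{**}$, i.e. (SP) for $\cl A$ --- the very statement being proved, so the argument is circular. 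Lemma \ref{RR} does let you write $\pi_J(\cl A^{**})\cong\cl A^{**}z_J\cong\cl D_J^{**}$ with $\cl D_J$ separable, but \textbf{(CHH)} is purely qualitative: it gives complete hyperreflexivity of each separably acting hyperreflexive algebra with no control of the constant, so it cannot "supply a single function $F$" uniformly over the (uncountably many) finite subsets $J\subseteq K$. As written, nothing prevents $\sup_J k(\pi_J(\cl A^{**})^\prime\bar\otimes\cl B(\ell^2(\bb N)))=\infty$, and then the final supremum over $J$ is not finite.

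The paper avoids the uniformity problem altogether by a different construction: using separability of $\cl A$ it selects countably many matrices and norming vectors, builds a single separable subspace $P(H)$ with $P\in\cl A^\prime$ on which compression preserves all matrix norms of $\cl A$, and a single separable $Q(K)$ norming all matrix levels of $u$, and then defines one bounded unital homomorphism $w(AP)=AP\oplus u(A)Q$ on $\cl AP$, extended via Lemmas \ref{an} and \ref{RR} to a $\mathrm{w}^*$-continuous homomorphism $\tilde w$ on the single separably acting von Neumann algebra $\overline{\cl AP}^{\mathrm{w}^*}$. Theorem \ref{SOL} is then invoked exactly once, and $\|u\|_{cb}\le\|\tilde w\|_{cb}$ requires no uniform constant over a family. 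Your skeleton could likely be repaired in the same spirit --- note that countably many $J$ suffice to compute $\|u\|_{cb}$, and assemble the corresponding central projections $z_J$ into a single separably acting quotient of $\cl A^{**}$ to which Theorem \ref{SOL} and Lemma \ref{11} apply --- but that assembly is precisely the missing idea, and without it the last inequality of your proposal is unjustified.
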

 \begin{proof} Since $M_n(\cl A)$ is also a separable algebra, there exists a countable set $$\left\{(A_{i,j}^{n,m})_{i,j}\in M_{n}(\cl A)\mid A_{i,j}^{n,m}\in\cl A,\,m\in\bb N\right\}$$ which is norm-dense in $M_n(\cl A).$ Moreover, there exists a sequence of unit vectors $$\xi_k^{n,m}=(\xi_k^{n,m}(s))_{1\le s\le n},\,\,\xi_k^{n,m}(s)\in H,\,\,m,k \in \bb N$$
 such that 
 $$\|(A_{i,j}^{n,m})\|=\sup_k\|(A_{i,j}^{n,m})\xi_k^{n,m}\|.$$
 Similarly, there exists a sequence of unit vectors $$\eta_k^{n,m}=(\eta_k^{n,m}(s))_{1\le s\le n},\,\,\eta_k^{n,m}(s)\in K,\,\,m,k \in \bb N$$ such that $$\|(u(A_{i,j}^{n,m}))\|=\sup_k\|(u(A_{i,j}^{n,m}))\eta_k^{n,m}\|,\,\,m\in \bb N.$$
 We consider the separable Hilbert space 
 $$L=\overline{\left[\begin{pmatrix}A_{i,j}^{n,m} & 0\\ 0 & u(A_{i,j}^{n,m}) \end{pmatrix}\,\begin{pmatrix}\xi_{k}^{t,r}(s) \\ \eta_{k}^{t,l}(s) \end{pmatrix}\mid n,\,m,\,t,\,l,\,r \in \bb N,\, 1\le s\le n\right]}.$$
 Clearly, $$L=\overline{\left[\begin{pmatrix} A & 0\\ 0 & u(A) \end{pmatrix}\,\begin{pmatrix} \xi_{k}^{n,r}(s) \\ \eta_{k}^{n,l}(s) \end{pmatrix} \mid  A\in \cl A,\,n,\,l,\,r\in \bb N,\, 1\le s\le n\right]}.$$
 
 We set $P$ for the projection onto $$\overline{[A\,\xi_{k}^{n,r}(s)\in H\mid A\in\cl A,\,n,\,k,\,r\in \bb N,\,1\leq s\leq n]}\subseteq H$$ and $Q$ for the projection onto $$\overline{[u(A)\,\eta_{k}^{n,l}(s)\in H\mid A\in\cl A, \,n,\,k,\,l\in \bb N,\,1\leq s\leq n]}\subseteq K.$$ Then, $L\subseteq P(H)\oplus Q(K)$ and we observe that  
 $$A\,P(h)\in P(H),\,\,\forall\,A\in\cl A,\,h\in H,$$ which implies that $P\in\cl A^\prime.$ Furthermore, it is obvious that
 $$\{\xi_{k}^{n,r}(s)\mid 1\leq s\leq n\}_{\,n, \,r,\,k\in\bb N}\subseteq P(H),\,\,\{\eta_{k}^{n,l}(s)\mid 1\leq s\leq n\}_{\,n, k,\,l\in\bb N}\subseteq Q(K).$$
 The Hilbert spaces $L,\,P(H)$ and $Q(K)$ are all separable. On the other hand, 
 \begin{align*}
    \|(A_{i,j}^{n,m})\|&=\sup_k\|(A_{i,j}^{n,m})\xi_k^{n,m}\|\\&=\sup_k\|(A_{i,j}^{n,m}P)\xi_k^{n,m}\|\\&\le \|(A_{i,j}^{n,m}P)\|\\&\le \|(A_{i,j}^{n,m})\|,
 \end{align*}
 and thus $$\|(A_{i,j}^{n,m})\|=\|(A_{i,j}^{n,m}P)\|,\,\,m\in \bb N,\,\,1\leq i,\,j\leq n,\,\,n\in \bb N.$$
 Similarly, it holds that
 $$\|(u(A_{i,j}^{n,m}))\|=\|(u(A_{i,j}^{n,m})Q)\|,\,\,m\in\bb N,\,\,1\leq i,\,j\leq n,\,\,n \in \bb N.$$
 Since $$\left\{(A_{i,j}^{n,m})\in M_{n}(\cl A)\mid A_{i,j}^{n,m}\in\cl A,\,\,m\in\bb N\right\}$$ is norm-dense in $M_n(\cl A)$ we obtain 
  $$\|(A_{i,j})\|_{M_n(\cl A)}=\|(A_{i,j}P)\|_{M_n(\cl A)},\,\,\forall\,(A_{i,j}) \in M_n(\cl A),\,n\in \bb N$$
  and
   $$\|(u(A_{i,j}))\|_{M_n(u(A))}=\|(u(A_{i,j})Q)\|_{M_n(\cl A)},\,\,\forall\, (A_{i,j}) \in M_n(\cl A),\,n\in \bb N.$$
   
  We define the map
   $w\colon \cl A P\to \cl B(L) $
as follows $$w( AP)=\begin{pmatrix} AP & 0\\ 0 & u(A)Q \end{pmatrix},\,A\in\cl A.$$
This map is bounded because for all $A\in \cl A$ we have 
\begin{align*}
    \|w(AP)\|&\le \|AP\|+\|u(A)Q\|\\&=\|A\|+\|u(A)\|\\&\le (1+\|u\|)\|A\|\\&=(1+\|u\|)\|AP\|.
\end{align*} Moreover, $w$ is a unital homomorphism and
\begin{align*}
    \|u_n\|&=\sup\{\|(u(A_{i,j}))\| \mid  (A_{i,j})\in Ball(M_{n}(\cl A))\}\\&=\sup\{\|(u(A_{i,j})Q))\| \mid (A_{i,j}P)\in Ball(M_n(\cl A P))\}\\&\le \|w_n\|.
\end{align*}
By Lemmas \ref{an} and \ref{RR} the map $w$ can be extended to a $\mathrm{w}^*$-continuous and unital bounded homomorphism 
 $$\tilde w\colon \overline{\cl A P}^{\mathrm{w}^*}\to \cl B(L).$$ Since $P(H)$ is separable, Theorem \ref{SOL} implies that the map $\tilde w$ is completely bounded and we have 
 $$\sup_{n}\|u_n\|\le \sup_{n}\|w_n\|=\sup_n\|\tilde w_n\|= \|\tilde w\|_{cb}<\infty.$$
Therefore, $u$ is completely bounded.
  \end{proof}

\begin{thm}  \label{sp} Under (CHH), all unital $C^*$-algebras satisfy (SP).
\end{thm}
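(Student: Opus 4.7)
The plan is to combine Lemma \ref{RRR} with Haagerup's theorem and a standard separability reduction.

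First I would reduce to the unital case by considering the unitization $\cl A^+$: any bounded homomorphism $u\colon \cl A\to \cl B(K)$ extends to a unital bounded homomorphism $\tilde u\colon \cl A^+\to \cl B(K)$ via $\tilde u(A+\lambda)=u(A)+\lambda I_K,$ and a similarity for $\tilde u$ restricts to a similarity for $u.$ So assume $\cl A$ is a unital $C^*$-algebra and $u\colon \cl A\to \cl B(K)$ is a unital bounded homomorphism. By Haagerup's theorem \cite[Theorem 1.10]{haag}, $u$ is similar to a $*$-homomorphism if and only if $u$ is completely bounded, so it suffices to prove that $u$ is completely bounded.

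Suppose, for a contradiction, that $\sup_n \|u_n\| = \infty.$ Then there exist sequences $(n_k)_{k\in\bb N}\subseteq \bb N$ and $T_k=(A_{i,j}^{(k)})\in Ball(M_{n_k}(\cl A))$ with $\|u_{n_k}(T_k)\|\to\infty.$ Let $\cl A_0\subseteq \cl A$ be the separable $C^*$-subalgebra generated by the countable set $\left\{A_{i,j}^{(k)}\mid k\in\bb N,\,1\le i,j\le n_k\right\}\cup\{I\},$ and represent $\cl A_0\subseteq \cl A_0^{**}\subseteq \cl B(H)$ on a Hilbert space $H$ such that $I_H\in\cl A_0.$ Applying Lemma \ref{RRR} to the unital bounded homomorphism $u|_{\cl A_0}\colon \cl A_0\to \cl B(K),$ we conclude that $u|_{\cl A_0}$ is completely bounded. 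Since each $T_k$ has its entries in $\cl A_0,$ the norms $\|u_{n_k}(T_k)\|=\|(u|_{\cl A_0})_{n_k}(T_k)\|\le \|u|_{\cl A_0}\|_{cb}$ are uniformly bounded, contradicting $\|u_{n_k}(T_k)\|\to\infty.$ Therefore $u$ is completely bounded and hence similar to a $*$-homomorphism by \cite[Theorem 1.10]{haag}.

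Essentially all of the technical work has already been carried out inside Lemma \ref{RRR}, where the hypothesis {\bf (CHH)} enters through Theorem \ref{SOL} to produce the $\mathrm{w}^*$-continuous extension $\tilde w$ on a separably acting von Neumann algebra. The only remaining step is the contradiction/separability reduction just described, which lifts the conclusion of Lemma \ref{RRR} from separable to arbitrary $C^*$-algebras; this is straightforward once Haagerup's complete-boundedness criterion is at hand, and I do not anticipate a serious obstacle here.
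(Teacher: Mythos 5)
Your proposal is correct and follows essentially the same route as the paper: assume $u$ is not completely bounded, extract matrices witnessing unboundedness of the $\|u_n\|$, pass to the separable unital $C^*$-subalgebra generated by their entries, apply Lemma \ref{RRR} to the restriction to get a contradiction, and conclude via Haagerup's criterion \cite[Theorem 1.10]{haag}. The only difference is that you spell out the unitization step for non-unital algebras, which the paper dismisses with a one-line remark.
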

\begin{proof} Let $\cl A$ be a unital $C^*$-algebra and $u\colon \cl A \to \cl B(H)$ be a unital bounded
homomorphism which is not completely bounded. Then, for each $n\in\mathbb{N}$  there
exists $a_{k_n} = (a_{i,j}^{k_n})
\in Ball(M_{k_n})(\cl A)$ such that $$||u_{k_n}||\geq n,\,\forall\,n\in\bb N,$$ where $u_{k_n}\colon M_{k_n}(\cl A)\to M_{k_n}(\cl B(H)),\,\,u_{k_n}((A_{i,j})_{i,j=1}^{k_n})=(u(A_{i,j}))_{i,j=1}^{k_n}.$
We consider the unital separable $C^*$-algebra $\cl A_0$ generated by the elements $$a_{i,j}^{k_n}\in\cl A,\,\,1\leq i,j\leq k_n,\,\,n\in\bb N.$$
By Lemma \ref{RRR} the unital bounded homomorphism $u_0=u|_{\cl A}$ is completely bounded. However, this is a contradiction since $$||(u_0)_{k_n}(a_{k_n})||\geq n,\,\forall\,n\in\bb N.$$
Therefore, every bounded homomorphism $u\colon \cl A\to \cl B(H),$ where $H$ is a Hilbert space, is completely bounded. According to \cite[Theorem 1.10]{haag}, the algebra $\cl A$ satisfies (SP).

\end{proof}

\begin{cor}
    Under (CHH), all $C^*$-algebras satisfy (SP).
\end{cor}

\begin{proof}
  According to Theorem \ref{sp}, it suffices to prove the assertion for the case of a non-unital $C^*$-algebra. Let $\cl A$ be a non-unital $C^*$-algebra and $\cl A\oplus \bb C$ be its unitization. Let $$\iota\colon \cl A\oplus \bb C\to \cl B(K)$$ be a 1-1 $*$-homomorphism, where $K$ is a Hilbert space. By Lemma 2.1.12 in \cite{bm} we have $$|\lambda|\leq ||\iota(A,0)+\lambda I_{K}||=||\iota(A,\lambda)||=||(A,\lambda)||,\,\lambda\in\bb C,\,\,A\in\cl A.$$
  Also, if $A\in\cl A$ it holds that $$||A||\leq ||(A,\lambda)||+|\lambda|\leq 2\,||(A,\lambda)||,\,\,\lambda\in\bb C.$$
  Let $u\colon \cl A\to \cl B(H),$ where $H$ is a Hilbert space, be a bounded homomorphism and define $$\tilde{u}\colon \cl A\oplus \bb C\to \cl B(H),\,\,\tilde{u}((A,\lambda))=u(A)+\lambda I_{H}.$$
  This is a well defined bounded homomorphism because \begin{align*}
      ||\tilde{u}((A,\lambda))||&=||u(A)+\lambda I_{H}||\\&\leq ||u||||A||+|\lambda|\\&\leq 2\,||u|||\,||(A,\lambda)||+||(A,\lambda)||\\&=(2\,||u||+1)\,||(A,\lambda)||,\,\,A\in\cl A,\,\,\lambda\in\bb C,
  \end{align*}
However, $\cl A\oplus \bb C$ is a unital $C^*$-algebra and Theorem \ref{sp} yields that there exists a $*$-homomorphism $\pi\colon \cl A\oplus \bb C\to \cl B(H)$ and an invertible operator $S\in\cl B(H)$ such that $$\tilde{u}(A,\lambda)=S^{-1}\pi(A,\lambda)S,\,\,A\in\cl A,\,\,\lambda\in\bb C.$$
 We set $\pi_0\colon \cl A\to \cl B(H)$ by $\pi_0(A)=\pi(A,0).$ Then $\pi_0$ is a $*$-homomorphism and $$u(A)=\tilde{u}(A,0)=S^{-1}\pi(A,0)S=S^{-1}\pi_0(A)S,\,\,A\in\cl A,$$ that is $u$ is similar to a $*$-homomorphism, as desired.
\end{proof}

Concluding this section, assuming again that {\bf (CHH)} is true we obtain the following corollaries.

\begin{cor} \label{cor1} Let $\cl A$ be a $C^*$-algebra and $\,\pi\colon \cl A\to \cl B(H)$ be a $*$-homomorphism. Then every derivation $\,\delta\colon \cl A\to \cl B(H)$ with respect to $\pi$ is inner.
\end{cor}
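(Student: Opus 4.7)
The plan is essentially to invoke Theorem \ref{sp} and record it in the language of derivations, via the well-known equivalence (Kirchberg \cite{ki}) between the similarity problem and the derivation problem for $C^*$-algebras, recorded as (1) $\Leftrightarrow$ (3) in the introduction. Under \textbf{(CHH)}, Theorem \ref{sp} supplies (SP) for every $C^*$-algebra, so essentially no new work is required and the corollary is just the derivation-theoretic reformulation.

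Concretely, I would first reduce to the unital case by adjoining a unit to $\cl A$ and extending $\pi$ and $\delta$ in the canonical way (the extension of $\delta$ by $\delta(1)=0$ remains a derivation with respect to the unital extension of $\pi$). Given the bounded derivation $\delta$ with respect to $\pi$, form the standard dilation
$$u\colon \cl A\to \cl B(H\oplus H),\quad u(A)=\begin{pmatrix} \pi(A) & \delta(A)\\ 0 & \pi(A)\end{pmatrix}.$$
The derivation identity $\delta(AB)=\pi(A)\delta(B)+\delta(A)\pi(B)$ is exactly what is needed for $u$ to be a bounded unital homomorphism; boundedness of $\delta$ is automatic for $C^*$-algebras (Sakai) and boundedness of $\pi$ gives boundedness of $u$. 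By Theorem \ref{sp}, there exists an invertible $S\in\cl B(H\oplus H)$ such that $\rho(\cdot)=S^{-1}u(\cdot)S$ is a $*$-homomorphism.

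From here the remaining task, and the only real content of the deduction, is to extract the implementing operator $T\in\cl B(H)$ from $S$. A standard normalization (Kirchberg's argument, also laid out in \cite[Chapter 4]{pisier1} and alluded to in the introduction of the present paper) shows that one may take $S$ of the upper-triangular form $S=\bigl(\begin{smallmatrix} I & T\\ 0 & I\end{smallmatrix}\bigr)$ for some $T\in\cl B(H)$; a direct computation of $S^{-1}u(A)S$ then yields $\delta(A)=T\pi(A)-\pi(A)T=\delta_{\pi,T}(A)$ for all $A\in\cl A$, as required.

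The main (and essentially only) obstacle is this block-triangular normalization of $S$. I would not reprove it here, since it is precisely the classical Kirchberg/Pisier reduction; instead I would cite \cite{ki} and \cite[Chapter 4]{pisier1} and note that Theorem \ref{sp} is exactly the input that was missing for an unconditional statement. Everything else is formal.
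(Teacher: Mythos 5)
Your proposal coincides with the paper's (implicit) proof: the corollary is stated there without argument precisely because it follows from Theorem \ref{sp} combined with the classical equivalence between the similarity problem and the derivation problem recorded in the introduction (Kirchberg \cite{ki}, Pisier \cite{pisier1}), which is exactly the route you take, with the dilation $u(A)=\bigl(\begin{smallmatrix}\pi(A) & \delta(A)\\ 0 & \pi(A)\end{smallmatrix}\bigr)$ being the standard mechanism behind that equivalence. Just be aware that the ``normalization'' of $S$ to upper-triangular form is not a formal matrix manipulation but is exactly the nontrivial content of the cited equivalence (similarity $\Rightarrow$ complete boundedness by Haagerup \cite{haag}, and then Christensen's theorem \cite{chris} that completely bounded derivations are inner), and that automatic boundedness of $\delta$ is Ringrose's theorem \cite{ring} rather than Sakai's; since you, like the paper, quote these from the literature, the deduction stands.
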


\begin{cor} \label{cor2} Every von Neumann algebra is hyperreflexive.

\end{cor}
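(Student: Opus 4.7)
The plan is to chain together three pieces already proved in the paper: Theorem \ref{sp} (under \textbf{(CHH)}, every $C^*$-algebra satisfies (SP)), Theorem \ref{basic} (the characterization of (WSP) via hyperreflexivity of the commutant amplified by $\cl B(\ell^2(I))$), and Lemma \ref{102} (monotonicity of the hyperreflexivity constant under index reduction).

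Fix an arbitrary von Neumann algebra $\cl B \subseteq \cl B(H)$. First I would pass to the commutant: let $\cl A = \cl B'$, which is itself a von Neumann algebra, hence in particular a unital $C^*$-algebra. By Theorem \ref{sp} (whose hypothesis is precisely \textbf{(CHH)}, which we are now assuming), the algebra $\cl A$ satisfies (SP). Since every $\mathrm{w}^*$-continuous unital bounded homomorphism $u\colon \cl A \to \cl B(K)$ is in particular a bounded homomorphism, (SP) trivially implies (WSP). Hence $\cl B'$ satisfies (WSP).

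Next I would apply Theorem \ref{basic} to $\cl A = \cl B'$: since $\cl B'$ satisfies (WSP), the algebras
\[
(\cl B')' \bar\otimes \cl B(\ell^2(I)) = \cl B \bar\otimes \cl B(\ell^2(I))
\]
are hyperreflexive for every cardinal $I$ (using the von Neumann double commutant identity $\cl B'' = \cl B$). Finally, applying Lemma \ref{102} with the cardinal $J$ of size one and any infinite cardinal $I$, we obtain
\[
k(\cl B) \leq k(\cl B \bar\otimes \cl B(\ell^2(I))) < \infty,
\]
so $\cl B$ is hyperreflexive.

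There is no real obstacle here: the corollary is essentially a bookkeeping consequence of Theorem \ref{sp} together with the direction of Theorem \ref{basic} that goes from (WSP) to hyperreflexivity of the commutant tensored with $\cl B(\ell^2(I))$. The only point to verify is the harmless observation that (SP) trivially implies (WSP) for a von Neumann algebra, and that compressing the amplification back down preserves finiteness of the hyperreflexivity constant, which is exactly Lemma \ref{102}.
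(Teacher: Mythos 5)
Your argument is correct, and each step checks out: Theorem \ref{sp} gives (SP) for the commutant $\cl B'$ under \textbf{(CHH)}; (SP) trivially implies (WSP) since the $\mathrm{w}^*$-continuous unital bounded homomorphisms form a subclass of all bounded homomorphisms; Theorem \ref{basic} applied to $\cl B'$ together with the double commutant theorem gives hyperreflexivity of $\cl B''\bar\otimes\cl B(\ell^2(I))=\cl B\bar\otimes\cl B(\ell^2(I))$ for all cardinals $I$; and Lemma \ref{102} with $J$ a singleton (its statement imposes no infiniteness on $J$) compresses back down, using the harmless identification of $\cl B\bar\otimes\bb C I_{\ell^2(J)}$ acting on $H\otimes\ell^2(J)$ with $\cl B$ acting on $H$. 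The paper itself gives no proof of this corollary: it states it (together with Corollary \ref{cor1}) immediately after Theorem \ref{sp}, implicitly invoking the classical equivalence recorded in the introduction between ``all unital $C^*$-algebras satisfy (SP)'' and ``every von Neumann algebra is hyperreflexive,'' which rests on external results of Christensen and Kirchberg. Your route is therefore genuinely different in presentation: instead of citing that black-box equivalence, you re-derive the implication entirely from the paper's own machinery (Theorem \ref{basic} and Lemma \ref{102}), which makes the corollary self-contained at the cost of a slightly longer chain; the paper's implicit route is shorter but leans on the literature. Note also that your route yields no quantitative control on $k(\cl B)$ beyond finiteness, which is all the corollary asks for, so nothing is lost.
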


\section{Injective von Neumann algebras and w*-similarity property}

In this section, we fix an injective von Neumann algebra $\cl A$ acting on the Hilbert space $H,$ as well as a von Neumann algebra $\cl B$ acting on the Hilbert space $K$ that satisfies (WSP).
We are going to prove that 
the spatial tensor product $\cl A\bar \otimes \cl B
$ satisfies (WSP). First, we need to prove some useful lemmas.

Since $\cl A^{\prime}\bar\otimes \cl B(K)$ is also an injective von Neumann algebra because it is a spatial tensor product of injective von Neumann algebras, there exists a norm one projection $$\Phi\colon \cl B(H)\bar \otimes \cl B(K)\rightarrow \cl B(H)\bar \otimes \cl B(K)$$ onto 
$ \cl A^\prime\bar \otimes \cl B(K) .$  

\begin{lem}\label{301} If $\,T\in \cl B(H)$ there exists $X\in \cl B(H)$ such that $$ \Phi (T\otimes I_K) =X\otimes I_K.$$
\end{lem}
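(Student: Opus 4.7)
The plan is to exploit the bimodule property of the norm-one projection $\Phi$ together with the fact that $\cl A'\bar\otimes \cl B(K)$ contains the entire copy $I_H \otimes \cl B(K).$ Since $\cl A'$ is injective (as $\cl A$ is) and $\cl B(K)$ is injective, the tensor $\cl A'\bar\otimes \cl B(K)$ is an injective von Neumann algebra, so by Corollary \ref{ce} the projection $\Phi$ is an $\cl A'\bar\otimes \cl B(K)$-bimodule map.

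First I would observe that for every $S\in\cl B(K)$ the operator $I_H\otimes S$ lies in $\cl A'\bar\otimes \cl B(K),$ since $I_H\in\cl A'.$ Then the bimodule property of $\Phi$ gives, for each $T\in\cl B(H)$ and $S\in\cl B(K),$
\begin{equation*}
\Phi(T\otimes I_K)(I_H\otimes S)=\Phi\bigl((T\otimes I_K)(I_H\otimes S)\bigr)=\Phi(T\otimes S),
\end{equation*}
and symmetrically $(I_H\otimes S)\Phi(T\otimes I_K)=\Phi\bigl((I_H\otimes S)(T\otimes I_K)\bigr)=\Phi(T\otimes S).$ Hence $\Phi(T\otimes I_K)$ commutes with every element of $I_H\otimes \cl B(K).$

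Finally, I would invoke the standard commutant identity $(I_H\otimes \cl B(K))' = \cl B(H)\bar\otimes \bb C I_K,$ which is a classical consequence of the commutation theorem for tensor products of von Neumann algebras. This immediately yields an operator $X\in\cl B(H)$ with $\Phi(T\otimes I_K)=X\otimes I_K,$ as desired. (As a bonus, since the range of $\Phi$ lies in $\cl A'\bar\otimes \cl B(K),$ one even gets $X\in\cl A',$ though the statement only asks for $X\in\cl B(H).$) No step here presents a serious obstacle; the only subtlety is ensuring the bimodule property of $\Phi,$ which is precisely Corollary \ref{ce} once we note that both tensor factors are injective.
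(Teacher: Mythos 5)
Your argument is correct and is essentially the paper's own proof: both use the conditional expectation (bimodule) property of $\Phi$ from Corollary \ref{ce} over elements $I_H\otimes S$ of $\cl A'\bar\otimes\cl B(K)$ to show that $\Phi(T\otimes I_K)$ commutes with $I_H\otimes\cl B(K)$, and then conclude via the commutant identity $(\bb C I_H\otimes\cl B(K))'=\cl B(H)\bar\otimes\bb C I_K$ that $\Phi(T\otimes I_K)=X\otimes I_K$ for some $X\in\cl B(H)$.
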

\begin{proof} Since $ \lambda I_H\otimes M $ belongs to $ \cl A^\prime\bar \otimes \cl B(K)=Ran(\Phi) $ for all $\lambda \in \bb C,\, M\in 
\cl B(K),$ by the fact that $\Phi$ is a conditional expectation, Corollary \ref{ce}, we have that \begin{align*}
    (\lambda I_H\otimes M) \Phi ( T\otimes I_K ) &=\Phi ((\lambda I_H\otimes M)(T\otimes I_K ) ) \\&=\Phi (\lambda T\otimes M)\\&=\Phi (T\otimes I_K) (\lambda I_H\otimes M) .
\end{align*}

Thus $$\Phi ( T\otimes I_K ) \in (\bb C\,I_{H}\otimes \cl B(K))^\prime=\cl B(H)\otimes \bb C\,I_{K}$$ Therefore there exists 
$X\in \cl B(H)$ such that \[\Phi (T\otimes I_K) =X\otimes I_K\qedhere\] \end{proof}

We denote by $\cl W$ the von Neumann algebra $\cl W=\cl B(H)\bar \otimes \cl B.$

\begin{lem}\label{302} $$\Phi (\cl W)\subset \cl W.$$
\end{lem}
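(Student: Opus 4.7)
The plan is to exploit the bimodule property of the norm-one projection $\Phi$ and the standard tensor-product commutant theorem. Concretely, I will show that $\Phi(W)$ lies in the commutant of $\cl W'$, where $\cl W'$ is computed explicitly.

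First I would identify the commutant of $\cl W = \cl B(H)\bar\otimes \cl B$ acting on $H\otimes K$. By the commutation theorem for spatial tensor products of von Neumann algebras,
\begin{equation*}
\cl W' = \cl B(H)' \bar\otimes \cl B' = \bb C I_H \bar\otimes \cl B'.
\end{equation*}
Thus $\cl W = (\bb C I_H \bar\otimes \cl B')'$, so to prove $\Phi(W)\in\cl W$ it suffices to show that $\Phi(W)$ commutes with every element $I_H\otimes Y$ with $Y\in\cl B'$.

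Next I would observe that for every $Y\in\cl B'\subseteq \cl B(K)$ the element $I_H\otimes Y$ lies in $\cl A'\bar\otimes \cl B(K)$, which is the range of $\Phi$. By Corollary \ref{ce}, $\Phi$ is an $\cl A'\bar\otimes \cl B(K)$-bimodule map. Hence for every $W\in\cl W$ and every $Y\in\cl B'$,
\begin{align*}
\Phi(W)(I_H\otimes Y) &= \Phi\bigl(W(I_H\otimes Y)\bigr) \\
&= \Phi\bigl((I_H\otimes Y)W\bigr) \\
&= (I_H\otimes Y)\Phi(W),
\end{align*}
where the middle equality uses that $W\in\cl W = (\bb C I_H \bar\otimes \cl B')'$ commutes with $I_H\otimes Y$. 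Therefore $\Phi(W) \in (\bb C I_H\bar\otimes \cl B')' = \cl W$, which is the desired conclusion.

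There is no real obstacle here beyond correctly invoking the tensor-product commutant theorem and the bimodule property of $\Phi$; the computation is essentially a one-line bimodule manipulation. As a bonus we actually obtain the slightly stronger statement $\Phi(\cl W)\subseteq (\cl A'\bar\otimes \cl B(K))\cap \cl W = \cl A'\bar\otimes \cl B$, which may be useful in the sequel.
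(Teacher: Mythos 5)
Your proof is correct, but it follows a genuinely different route from the paper's. The paper argues on elementary tensors: using the conditional expectation property it writes $\Phi(T\otimes B)=\Phi\bigl((T\otimes I_K)(I_H\otimes B)\bigr)=\Phi(T\otimes I_K)(I_H\otimes B)$, then invokes Lemma \ref{301} to get $\Phi(T\otimes I_K)=X\otimes I_K$, whence $\Phi(T\otimes B)=X\otimes B\in\cl W$. You instead bypass Lemma \ref{301} entirely and run a commutant argument: since every $I_H\otimes Y$ with $Y\in\cl B'$ lies both in the range $\cl A'\bar\otimes\cl B(K)$ of $\Phi$ and in $\cl W'$, the bimodule property gives $\Phi(W)(I_H\otimes Y)=\Phi\bigl(W(I_H\otimes Y)\bigr)=\Phi\bigl((I_H\otimes Y)W\bigr)=(I_H\otimes Y)\Phi(W)$, so $\Phi(W)\in(\bb C I_H\bar\otimes\cl B')'=\cl B(H)\bar\otimes\cl B$; the commutant identity you need here is only the easy (one factor equal to $\bb C I_H$ or $\cl B(H)$) case of the commutation theorem, which is elementary. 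Your approach has a real advantage: it treats an arbitrary $W\in\cl W$ directly, whereas the paper's computation covers only elementary tensors and, since $\Phi$ is merely a norm-one (not necessarily normal) projection, extending from the algebraic tensor product to its $\mathrm{w}^*$-closure is not automatic; your argument needs no such density step. What the paper's route buys is the explicit formula $\Phi(T\otimes B)=X\otimes B$, consistent with Lemma \ref{301}. One caveat: your closing ``bonus'' claim that $(\cl A'\bar\otimes\cl B(K))\cap\cl W=\cl A'\bar\otimes\cl B$ is not free; that intersection formula is exactly the substance of Lemma \ref{303}, where it is obtained from the slice-map ($S_\sigma$) property of the injective algebra $\cl A'$ (or, alternatively, from the full commutation theorem), so you should not present it as an immediate consequence of your computation.
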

\begin{proof} Fix $T\in \cl B(H),\,B\in \cl B.$ Since $I_H\otimes B\in \cl A^\prime\bar \otimes \cl B(K)=Ran(\Phi)$ and $\Phi$ is a conditional expectation, we have that $$\Phi (T\otimes B)=\Phi ((T\otimes I_K)(I_H\otimes B))=
\Phi (T\otimes I_K)(I_H\otimes B).$$ By Lemma \ref{301} there exists $X\in \cl B(H)$ such that 
$\Phi (T\otimes I_K)=X\otimes I_K.$ Thus \[\Phi (T\otimes B)=(X\otimes I_K)\,(I_H\otimes B)=X\otimes B\in \cl B(H)\bar \otimes \cl B=\cl W.\qedhere\]
\end{proof}

\begin{lem}\label{303} The projection $\Phi$ maps $\cl W$ onto $\cl A^{\prime}\bar \otimes 
\cl B$, i.e. $$\Phi (\cl W)=\cl A^\prime\bar \otimes \cl B.$$
\end{lem}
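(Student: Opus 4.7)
The plan is to prove the two inclusions separately.

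For the inclusion $\Phi(\cl W) \subseteq \cl A^\prime \bar\otimes \cl B$, I would combine Lemma \ref{302} with the fact that the range of $\Phi$ is $\cl A^\prime \bar\otimes \cl B(K)$. This gives
$$\Phi(\cl W) \subseteq \cl W \cap \bigl(\cl A^\prime \bar\otimes \cl B(K)\bigr) = \bigl(\cl B(H)\bar\otimes \cl B\bigr)\cap\bigl(\cl A^\prime \bar\otimes \cl B(K)\bigr).$$
So everything reduces to the tensor product intersection identity
$$\bigl(\cl B(H)\bar\otimes \cl B\bigr)\cap\bigl(\cl A^\prime \bar\otimes \cl B(K)\bigr)=\cl A^\prime\bar\otimes \cl B.$$

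To establish this identity, I would apply the commutation theorem for tensor products of von Neumann algebras. Writing the two factors as relative commutants, we have $\cl A^\prime\bar\otimes \cl B(K) = (\cl A\otimes \bb C I_K)^\prime$ and $\cl B(H)\bar\otimes \cl B = (\bb C I_H\otimes \cl B^\prime)^\prime$, so the intersection equals the commutant of the von Neumann algebra generated by $\cl A\otimes \bb C I_K$ and $\bb C I_H\otimes \cl B^\prime$, namely $(\cl A\bar\otimes \cl B^\prime)^\prime$. By the commutation theorem, $(\cl A\bar\otimes \cl B^\prime)^\prime = \cl A^\prime\bar\otimes \cl B$, which is the desired identity.

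For the reverse inclusion $\cl A^\prime\bar\otimes \cl B\subseteq \Phi(\cl W)$, the argument is immediate: any $X\in \cl A^\prime\bar\otimes \cl B$ lies in $\cl W=\cl B(H)\bar\otimes \cl B$ and also in the range $\cl A^\prime\bar\otimes \cl B(K)$ of the projection $\Phi$, so $X=\Phi(X)\in \Phi(\cl W)$.

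The only nontrivial step is the tensor product intersection identity, but this is a classical consequence of Tomita's commutation theorem for von Neumann algebra tensor products and does not present a real obstacle; the bulk of the work has already been done in Lemmas \ref{301} and \ref{302}, which ensured that $\Phi$ restricts nicely to $\cl W$.
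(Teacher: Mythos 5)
Your proof is correct, and it reaches the crucial intersection identity $(\cl B(H)\bar\otimes\cl B)\cap(\cl A^\prime\bar\otimes\cl B(K))=\cl A^\prime\bar\otimes\cl B$ by a genuinely different route than the paper. The paper proves this inclusion by slice-map techniques: it introduces the maps $R_\omega$ for $\omega\in(\cl A^\prime)_*$ and invokes the $S_\sigma$ (slice map) property of injective von Neumann algebras from \cite{kr}, so that $\cl F(\cl A^\prime,\cl B)=\cl A^\prime\bar\otimes\cl B$ and any $T$ in the intersection, having all its slices in $\cl B$, lies in $\cl A^\prime\bar\otimes\cl B$; it also passes through the weak-$*$ closure $\overline{\Phi(\cl W)}^{\mathrm{w}^*}$ before upgrading to equality. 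You instead write both algebras as commutants, $\cl A^\prime\bar\otimes\cl B(K)=(\cl A\bar\otimes\bb C I_K)^\prime$ and $\cl B(H)\bar\otimes\cl B=(\bb C I_H\bar\otimes\cl B^\prime)^\prime$, so that the intersection is the commutant of the von Neumann algebra generated by the two, namely $(\cl A\bar\otimes\cl B^\prime)^\prime=\cl A^\prime\bar\otimes\cl B$ by Tomita's commutation theorem. Both arguments are classical and valid; your version has the advantage that the intersection identity is established for arbitrary von Neumann algebras, with injectivity of $\cl A$ used only to guarantee the existence of $\Phi$, and your direct two-inclusion structure (including the reverse inclusion, which coincides with the paper's final step) avoids the paper's detour through the weak-$*$ closure. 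What the paper's approach buys is consistency with the slice-map machinery it already cites, at the cost of invoking injectivity (via the $S_\sigma$ property) where your argument shows it is not needed.
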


\begin{proof} If $A\in \cl A^\prime$ and $B\in \cl B$ then $A\otimes B\in \cl A^\prime\bar \otimes \cl B\subseteq \cl A^\prime\bar \otimes \cl B(K)=Ran (\Phi),$ so $A\otimes B=\Phi(A\otimes B)\in \Phi(\cl A^\prime \bar \otimes \cl B)\subseteq \Phi(\cl W).$  
Thus $$\cl A^\prime\bar \otimes  \cl B\subseteq \overline{\Phi (\cl W)}^{\mathrm{w}^*}.$$ By Lemma \ref{302} we have that 
$$\Phi (\cl W)=\Phi(\cl W)\cap \cl W\subseteq (\cl A^\prime\bar \otimes \cl B(K))\cap \cl W.$$ 
We aim to show that $$(\cl A^\prime\bar \otimes \cl B(K))\cap \cl W \subset \cl A^\prime\bar \otimes \cl B.$$

For all $\omega\in (\cl A^\prime)_{*}$ we define $$R_{\omega}\colon \cl A^\prime\bar \otimes \cl B(K)\to \cl B(K),\,\,x\otimes y\mapsto \omega(x)\,y,\,\,x\in\cl A^\prime,\,y\in\cl B(K).$$
Let $$\cl F(\cl A^\prime ,\cl B)=\left\{T\in\cl A^\prime \bar \otimes \cl B(K)\mid R_{\omega}(T)\in \cl B,\,\forall\,\omega\in (\cl A^\prime)_{*}\right\}.$$
It is known that injective von Neumann algebras have the $S_{\sigma}$ property, \cite{kr}. Since $\cl A^\prime$ is injective we have that $\cl F(\cl A^\prime,\cl B)=\cl A^\prime\bar \otimes \cl B.$

Fix $T\in (\cl A^\prime\bar \otimes \cl B(K))\cap \cl W.$ 
For all $\omega \in (\cl A^\prime)_{*}$ we have that $R_{\omega}(T)\in \cl B$ and thus 
$T\in \cl F(\cl A^\prime, \cl B)=\cl A^\prime\bar\otimes \cl B.$ 
We have proved that $$ \overline{\Phi (\cl W)}^{\mathrm{w}^*} =\cl A^\prime\bar \otimes \cl B.$$
 If $X\in \cl A^\prime \bar \otimes \cl B$ then $X\in Ran \Phi \cap \cl W$ and thus $X=\Phi (X)\in \Phi (\cl W).$ 
Therefore  $\Phi (\cl W)=\cl A^\prime\bar \otimes \cl B,$ as desired.
\end{proof}

\begin{lem}\label{304} Assume that $\cl W= \cl B(H)\bar \otimes \cl B $ 
is hyperreflexive. Then $ \cl A^\prime \bar \otimes \cl B $ is hyperreflexive and $$k(\cl A^\prime \bar \otimes \cl B )\leq 
4+k(\cl W).$$
\end{lem}
\begin{proof} We recall $\Phi$ and $\cl W=\cl B(H)\bar \otimes \cl B$ from Lemma \ref{303}. 
If $T\in \cl B(H)\bar \otimes \cl B(K)$ then
$$d(T, \cl A^\prime\bar \otimes \cl B) =d(T, \Phi(\cl W))=  \inf_{W\in \cl W}\|T-\Phi (W)\| \leq
\|T-\Phi (T)\|+\inf_{W\in \cl W}\|\Phi (T)-\Phi (W)\|. $$ 

Since $\|\Phi \|=1$ we have that
\begin{equation}\label{ena} d(T, \cl A^\prime\bar \otimes \cl B) \leq 
\|T-\Phi (T)\|+ \inf_{W\in \cl W}\|T-W\| .  
\end{equation}

Using the same arguments as those used in the proof of \cite[Theorem 9.6]{dav}, we have
\begin{equation}
    \label{new} \|T-\Phi (T)\|\leq 4\sup\{\|P^\bot TP\|\mid P\in\Lat{ \cl A^\prime\bar\otimes \cl B(K) }\}.
\end{equation}
By Lemma \ref{4.4} it holds that $$\sup\left\{||P^{\perp} T P||\,\,\,\mid P\in\Lat{\cl A^\prime\bar\otimes \cl B(K)}\right\}=\alpha_{\cl A^\prime \bar\otimes \cl B(K)}(T)\leq r_{\cl A^\prime \bar \otimes \cl B(K)}(T)$$
and according to (\ref{new}) we get $||T-\Phi(T)||\leq 4\,r_{\cl A^\prime \bar\otimes \cl B(K)}(T).$\\
However, $r_{\cl A^\prime \bar \otimes \cl B(K)}(T)\leq r_{\cl A^\prime \bar \otimes \cl B}(T)$ since $\cl A^\prime \bar \otimes \cl B\subseteq \cl A^\prime \bar \otimes \cl B(K),$ and thus 
 \begin{equation}\label{dyo} \|T-\Phi (T)\|\leq 4r_{\cl A^\prime\bar\otimes \cl B}(T).  
\end{equation}

Furthermore, $\cl A^{\prime}\bar\otimes \cl B \subseteq \cl W\Rightarrow r_{\cl W}(T)\leq r_{\cl A^{\prime}\bar \otimes \cl B}(T),$ so  

\begin{equation}\label{tria} d(T, \cl W) \leq k(\cl W)\, r_{\cl W}(T)\leq 
k(\cl W)\, r_{\cl A^\prime\bar \otimes \cl B}(T) . \end{equation}

By inequalities (\ref{ena}), (\ref{dyo}) and (\ref{tria}) we have 
$$d(T, \cl A^\prime\bar \otimes \cl B) \leq (4+k(\cl W) )r_{\cl A^\prime\bar \otimes \cl B}(T)\Rightarrow  \dfrac{d(T, \cl A^{\prime}\bar \otimes \cl B)}{r_{\cl A^\prime\bar \otimes \cl B}(T)}\leq 4+k(\cl W).  $$
We conclude that $\cl A^\prime\bar \otimes \cl B$ is hyperreflexive and $k(\cl A^{\prime}\bar \otimes \cl B)\leq 4+k(\cl W).$
\end{proof}

We prove now the main theorem of this section. 

\begin{thm}\label{305} The algebra $\cl A\bar \otimes \cl B$ satisfies (WSP).
\end{thm}
\begin{proof} Since $\cl B$ satisfies (WSP) so does $\mathbb{C}\,I_{H}\bar\otimes \cl B$. 
By Theorem \ref{101}, the algebra $$ (\bb  C\,I_{H}\bar \otimes \cl B)^{\prime}\bar \otimes \cl B(\ell^2(J))=\cl B(H) \bar \otimes 
\cl B^\prime \bar \otimes \cl B(\ell^2(J))$$ is hyperreflexive for all cardinals $J.$

By Lemma 
\ref{304}, $$\cl A^\prime \bar \otimes 
\cl B^\prime \bar \otimes \cl B(\ell^2(J))$$ is hyperreflexive for all cardinals $J.$

However 
$(\cl A\bar \otimes\cl B)^{\prime}=\cl A^\prime\bar \otimes \cl B^\prime$ and by Theorem \ref{101} we have that 
$\cl A\bar \otimes \cl B$ satisfies (WSP). 
\end{proof}

\begin{cor}\label{333} Let $\cl A$ be an injective von Neumann algebra and let $\cl B$ be a factor of type $II_1$ with property $\Gamma.$ Then $\cl A\bar \otimes \cl B$ satisfies (WSP).
\end{cor}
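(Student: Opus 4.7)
The plan is to apply Theorem~\ref{305} directly. By that theorem, whenever $\cl A$ is an injective von Neumann algebra and $\cl B$ is a von Neumann algebra satisfying (WSP), the spatial tensor product $\cl A\bar\otimes\cl B$ satisfies (WSP). Hence the entire content of the corollary reduces to verifying that the type $II_1$ factor $\cl B$ with property $\Gamma$ itself satisfies (WSP).

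For this last step I would invoke Christensen's classical theorem (see \cite[Chapter 7]{pisier1} for a modern exposition and \cite{chris} for the original source), which asserts that every type $II_1$ factor with property $\Gamma$ satisfies the similarity property (SP); equivalently, every bounded derivation from such a factor into $\cl B(H)$ is inner. Since a w$^*$-continuous unital bounded homomorphism $u\colon \cl B\to \cl B(H)$ is in particular a bounded homomorphism, (SP) trivially implies (WSP) in the von Neumann algebra setting. Therefore $\cl B$ satisfies (WSP).

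Putting these two ingredients together, Theorem~\ref{305} applied to this $\cl A$ and $\cl B$ yields that $\cl A\bar\otimes\cl B$ satisfies (WSP), as required. The only substantive, non-trivial input is Christensen's theorem, which is the main ``obstacle'' in the sense that it is the deep external fact being used; once it is in hand, the deduction from Theorem~\ref{305} is immediate and no new calculations are needed. As an alternative route, one could instead appeal to Christensen's hyperreflexivity result for commutants of property-$\Gamma$ factors together with Theorem~\ref{101}, but the SP-based path above is more direct.
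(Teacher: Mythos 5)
Your proof is correct and follows essentially the same route as the paper: the paper likewise cites the known fact that type $II_1$ factors with property $\Gamma$ satisfy (WSP) and then applies Theorem~\ref{305}, with your observation that (SP) trivially implies (WSP) filling in the standard justification. One small caveat: Christensen's similarity result for property $\Gamma$ factors appears in his paper on similarities of $II_1$ factors rather than in the derivations paper \cite{chris}, so the attribution should be adjusted, but this does not affect the argument.
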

\begin{proof} By combining \cite[Theorem 2.3]{echris} with \cite[Theorem 1.10]{haag} it follows that the factors of type $II_1$ with property $\Gamma$ satisfy (WSP) and the conclusion follows from Theorem \ref{305}.
\end{proof}

\begin{cor}\label{FIN} Let $\cl B$ be a von Neumann algebra satisfying (WSP) and $n\in \bb N.$ Then the algebra $M_n(\cl B)$ satisfies (WSP).
\end{cor}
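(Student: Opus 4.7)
The plan is to realize $M_n(\cl B)$ as a spatial tensor product and then invoke Theorem \ref{305}. Concretely, there is a canonical $*$-isomorphism (which is simultaneously a w*-homeomorphism) $M_n(\cl B) \cong M_n(\bb C) \bar\otimes \cl B$, where $M_n(\bb C)$ is identified with $\cl B(\bb C^n)$, the algebra of bounded operators on the finite-dimensional Hilbert space $\bb C^n$. This identification is standard and uses only that $\cl B$ acts on a Hilbert space $K$, so that $M_n(\cl B)$ acts on $\bb C^n \otimes K$.

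The remaining step is to observe that $\cl A := M_n(\bb C) = \cl B(\bb C^n)$ is an injective von Neumann algebra: the identity map is a norm one projection of $\cl B(\bb C^n)$ onto itself, so injectivity is immediate from Corollary \ref{ce}. Since $\cl B$ satisfies (WSP) by hypothesis, Theorem \ref{305} applied to the pair $(\cl A,\cl B)$ yields that $\cl A \bar\otimes \cl B \cong M_n(\cl B)$ satisfies (WSP), as required. There is essentially no obstacle here; the corollary is a direct instance of Theorem \ref{305} once the matrix algebra is recognized as an injective factor.
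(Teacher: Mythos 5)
Your argument is correct and is exactly the route the paper intends: Corollary \ref{FIN} is stated as an immediate consequence of Theorem \ref{305} (with its proof omitted), via the identification $M_n(\cl B)\cong M_n(\bb C)\bar\otimes\cl B$ and the injectivity of $M_n(\bb C)=\cl B(\bb C^n)$. No gaps; your explicit appeal to Corollary \ref{ce} for the injectivity of the matrix algebra just spells out what the paper leaves tacit.
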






In the same context, i.e. for an injective von Neumann algebra $\cl A$ acting on the Hilbert space $H$ and a von Neumann algebra $\cl B$ acting on the Hilbert space $K$ satisfying (WSP), we are going to prove that the w*-similarity degree of $\cl A\bar \otimes \cl B$ is less than $2k_0+8,$ where $$k_0=\sup_{J}k(\cl B^\prime \bar \otimes \cl B(\ell^2(J)))$$ and the above supremum is taken over all infinite cardinals $J.$ Since $\cl B$ satisfies (WSP), by Remark \ref{ttt}, we have that $k_0<\infty.$ Before we prove this result (see Theorem \ref{402}) we recall the definition of the w*-similarity degree.

Let $\cl C$ 
be a von Neumann algebra satisfying (WSP). In \cite[Theorem 4.1]{lemerd}, Christian Le Merdy  proved that there exists a constant $C>0$ 
and a positive integer $N$ such that for any w*-continuous bounded unital homomorphism 
$u\colon \cl C \rightarrow \cl B(L)$ ($L$: Hilbert space) it  holds that 
\begin{equation}\label{xxxv} \|u\|_{cb}\leq C\|u\|^N.
\end{equation}
The least positive integer $N$ satisfying (\ref{xxxv}) is called w*-similarity degree of $\cl C$ and it is denoted by $d_{*}(\cl C).$

\begin{thm}\label{402} Let $\cl A$ be an injective von Neumann algebra acting on the Hilbert space $H$ and $\cl B$ be a von Neumann algebra acting on the Hilbert space $K$ satisfying (WSP). Then  $$d_{*}( \cl A\bar \otimes \cl B )\leq 2k_0+8.$$
\end{thm}
\begin{proof} By Theorem \ref{305}, the algebra $\cl C=\cl A\bar \otimes \cl B $ satisfies (WSP). In Lemma \ref{12} we proved that for every inner derivation $\delta=\delta_{\pi,\,T}$ holds that $||\delta||_{cb}\leq 2 \theta ||\delta||,$ where $$\theta = \sup_J(k((\cl A\bar \otimes \cl B)^{\prime}\bar \otimes \cl B(\ell^2(J))))= \sup_J(k(\cl A^\prime \bar \otimes \cl B^\prime \bar \otimes \cl B(\ell^2(J)))),$$ $\,T\in\cl B(L)$ and $\pi$ is a normal representation of $\cl A\bar\otimes \cl B$ into $\cl B(L),$ where $L$ is a Hilbert space. 
According to \cite[Remark 10.8]{pisier1}, we have 
$\|u\|_{cb}\leq \|u\|^{2\theta }$ for every w*-continuous, unital and bounded homomorphism 
$u\colon \cl C \rightarrow \cl B(L),$ where $L$ is a Hilbert space.
However, the algebras $\cl B(H)\bar \otimes \cl B^\prime \bar \otimes \cl B(\ell^2(J))$ and $\cl B^\prime\bar \otimes \cl B(H\otimes \ell^2(J))$ are unitarily equivalent and since $\cl B^\prime \bar \otimes \cl B(H\otimes \ell^2(J))$ is hyperreflexive, the algebra $\cl B(H)\bar \otimes \cl B^\prime \bar \otimes \cl B(\ell^2(J))$ is also hyperreflexive. By Lemma \ref{304}, it follows that  $\theta \leq 4+k_0$ 
and thus $||u||_{cb}\leq ||u||^{2\theta}\leq ||u||^{2k_0+8}$ for every w*-continuous bounded unital homomorphism 
$u\colon \cl C \rightarrow \cl B(L),$ where $L$ is a Hilbert space. Therefore, $d_*(\cl A\bar \otimes \cl B)\leq 2k_0+8.$
\end{proof}

\textbf{Acknowledgements}: We are grateful to Professors M. Anoussis and A. Katavolos for their helpful comments and suggestions during the preperation of this paper. We would like also to express our appreciation to Professor N. Ozawa for providing us useful information about type $II_1$ factors. Finally, we would like to thank the referee for carefully reading the paper and making significant improvements.

------------------------------------------------------------------------
\end{document}